\pdfoutput=1
\documentclass[10pt,a4paper]{article}

\usepackage[numbers,sort&compress]{natbib}
\usepackage[english]{babel}

\usepackage[a4paper,top=2cm,bottom=2cm,left=2.6cm,right=2.8cm,marginparwidth=1.75cm]{geometry}

\usepackage[
  activate={true,nocompatibility},
  final,
  tracking=true,
  kerning=true,
  factor=1100,
  stretch=10,
  shrink=10
]{microtype}
\microtypecontext{spacing=nonfrench}

\usepackage[utf8]{inputenc}
\usepackage[T1]{fontenc}

\usepackage{amsmath,amssymb,amsfonts,amsthm,mathtools}
\usepackage{bm}
\usepackage{bbm}
\usepackage{esvect}
\allowdisplaybreaks

\usepackage{graphicx}
\usepackage{booktabs}
\usepackage{adjustbox}
\usepackage{caption}
\usepackage{subcaption}
\usepackage{float}
\usepackage{placeins}

\usepackage{tikz}
\usetikzlibrary{decorations.pathreplacing,decorations.markings}
\usepackage{pifont}

\usepackage{enumitem}
\usepackage{verbatim}

\usepackage{authblk}

\usepackage{xcolor}
\definecolor{mycitecolor}{RGB}{70,10,200}
\usepackage[colorlinks=true,linkcolor=blue,citecolor=mycitecolor,urlcolor=black]{hyperref}
\usepackage[nameinlink,noabbrev]{cleveref}

\newtheoremstyle{myplain}
  {3pt}   
  {3pt}   
  {\itshape}
  {}
  {\bfseries}
  {.}
  { }
  {}

\theoremstyle{myplain}
\newtheorem{theorem}{Theorem}[section]
\newtheorem{lemma}[theorem]{Lemma}

\newtheorem{corollary}[theorem]{Corollary}
\newtheorem{conjecture}[theorem]{Conjecture}

\newtheorem{observation}[theorem]{Observation}
\newtheorem{claim}[theorem]{Claim}

\theoremstyle{definition}

\theoremstyle{remark}

\newcommand{\EE}{\mathbb{E}}
\newcommand{\bbone}{\mathbf{1}}

\title{\bfseries On the Borodin--Kostochka conjecture for graphs with large maximum degree}

\author{\bfseries Feng Liu\footnote{Email: liufeng0609@126.com.}}
\author{\bfseries Shuang Sun\footnote{Email: chocolatesun@sjtu.edu.cn.}}
\author{\bfseries Yan Wang\footnote{Email: yan.w@sjtu.edu.cn (corresponding author).}}
\author{\bfseries Jiasheng Zeng\footnote{Email: jasonzeng@mail.ustc.edu.cn.}}

\affil{\footnotesize School of Mathematical Sciences, Shanghai Jiao Tong University, 800 Dongchuan Road, Shanghai 200240, China}

\date{}
\begin{document}
\maketitle

\begin{abstract}
The Borodin--Kostochka conjecture states that every graph $G$ with maximum degree $\Delta(G)\ge 9$ satisfies $\chi(G)\le \max\{\omega(G),\Delta(G)-1\}$. In this paper, we verify this conjecture for graphs with sufficiently large maximum degree. More precisely, we prove that every graph $G$ with maximum degree $\Delta \ge 5.3\times 10^6$ and clique number $\omega(G)<\Delta$ satisfies $\chi(G)\le \Delta-1$. This improves a longstanding result of Reed.

\smallskip
\noindent \textbf{Keywords:} Graph coloring; Borodin--Kostochka conjecture; maximum degree.

\noindent \textbf{AMS Subject Classification:} 05C15, 05C85.

\end{abstract}

\section{Introduction}
All graphs considered in this paper are finite, simple, and connected. For a graph $G$, we denote by $V(G)$ and $E(G)$ its vertex set and edge set, respectively, and by $\Delta(G)$ its maximum degree. For a set $S\subseteq V(G)$, we use $G[S]$ to denote the subgraph of $G$ induced by $S$, and write $G-S$ for $G[V(G)\setminus S]$. For a vertex $x\in V(G)$ and a subgraph $H$ of $G$, let $N_H(x)$ denote the set of neighbors of $x$ in $V(H)$. Let $X,Y$ be disjoint subsets of $V(G)$. For a vertex $v\in V(G)\setminus X$, we say that $v$ is {\em complete} to $X$ if $v$ is adjacent to every vertex in $X$, and {\em anticomplete} to $X$ if $v$ is nonadjacent to every vertex in $X$. We say that $X$ is {\em complete} (resp., {\em anticomplete}) to $Y$ if every vertex in $X$ is complete (resp., anticomplete) to $Y$. A subset $X$ of $V(G)$ is a {\em clique} if $G[X]$ is complete. The {\em clique number} of $G$, denoted by $\omega(G)$, is the maximum size of a clique in $G$.

A $k$-{\em coloring} of $G$ is a mapping $\varphi:V(G)\to \{1,2,\ldots,k\}$ such that $\varphi(u)\neq \varphi(v)$ whenever $u$ and $v$ are adjacent in $G$. The {\em chromatic number} of $G$ is the minimum integer $k$ such that $G$ admits a $k$-coloring. By greedily coloring a graph $G$, we have $\chi(G)\le \Delta(G)+1$. A classical theorem of Brooks~\cite{Brooks1941} shows that this bound can be improved as follows.

\begin{theorem}[Brooks~\cite{Brooks1941}]\label{thm:brooks}
If $G$ is a graph with $\Delta(G)\ge 3$, then $\chi(G)\le \max\{\Delta(G),\omega(G)\}$.
\end{theorem}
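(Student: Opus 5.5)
We prove Brooks' theorem in its standard equivalent form: if $G$ is connected, $\Delta=\Delta(G)\ge 3$, and $G$ is not $K_{\Delta+1}$, then $G$ is $\Delta$-colorable. This suffices, since when $\omega(G)=\Delta+1$ the graph is exactly $K_{\Delta+1}$ (a clique of size $\Delta+1$ is a component, and $G$ is connected), and then $\chi=\omega$. The engine throughout is \emph{greedy coloring along an ordering in which every vertex except the last has a later neighbor}. Such a vertex has at most $\Delta-1$ earlier-colored neighbors, so one of the $\Delta$ colors is always free.

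\textbf{Non-regular case.} If $G$ is not $\Delta$-regular, pick $r$ with $d(r)<\Delta$. Order the vertices by decreasing distance from $r$, using a BFS tree rooted at $r$. Every vertex other than $r$ has its parent later in the ordering, and $r$ itself has fewer than $\Delta$ neighbors. Hence greedy coloring uses at most $\Delta$ colors.

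\textbf{Cut-vertex case.} Suppose $G$ is $\Delta$-regular and has a cut vertex $v$. Let $C_1,\dots,C_k$ be the components of $G-v$, and set $G_i=G[V(C_i)\cup\{v\}]$. In each $G_i$ the vertex $v$ has degree less than $\Delta$, so each $G_i$ is $\Delta$-colorable by the previous case. Permute colors so that all these colorings agree on $v$, and combine them.

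\textbf{$2$-connected, $\Delta$-regular, not complete (the main step).} We look for vertices $u,w,x$ with $ux,wx\in E(G)$, $uw\notin E(G)$, and $G-\{u,w\}$ connected.

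\emph{Finding the triple.} If $G$ is $3$-connected, take any $x$ that has two nonadjacent neighbors. Such an $x$ exists because $G$ is connected and not complete: some vertex's neighborhood is not a clique. Then $G-\{u,w\}$ is connected by $3$-connectivity.

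Otherwise, choose $x$ such that $G-x$ has a cut vertex; this is possible since $G$ has a $2$-cut $\{x,y\}$. Then $G-x$ has at least two end-blocks. Each end-block contains a non-cut vertex of $G-x$ adjacent to $x$, since otherwise its cut vertex would cut $G$. Pick such $u$ and $w$ in two different end-blocks. They are nonadjacent because they lie in distinct end-blocks and are not cut vertices. Also $G-\{x,u,w\}$ is connected, since removing a non-cut vertex from an end-block keeps $G-x$ connected, and $\Delta\ge 3$ prevents the end-blocks from being trivial. Finally $x$ has degree $\Delta\ge 3$, so $x$ has a neighbor other than $u,w$, and therefore $G-\{u,w\}$ is connected.

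\emph{Coloring.} Color $u$ and $w$ with color $1$. Order the remaining vertices by decreasing distance from $x$ in $G-\{u,w\}$, ending with $x$. Each vertex before $x$ has a later neighbor, so greedy coloring succeeds on it. When we reach $x$, it has $\Delta$ neighbors, but two of them share color $1$, so at most $\Delta-1$ colors appear and $x$ can be colored.

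The main obstacle is the careful verification, in the 2-connected but not 3-connected subcase, that $u$ and $w$ are nonadjacent and that removing them leaves the graph connected.
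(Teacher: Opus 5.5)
The paper gives no proof of this statement; it simply imports Brooks' theorem from \cite{Brooks1941} as a classical tool. Your argument is a genuinely different route: it is the standard short proof due to Lov\'asz, and it is correct.

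\begin{itemize}
\item The BFS ordering toward a vertex of degree less than $\Delta$ settles the non-regular case.
\item At a cut vertex $v$, each $G_i$ still has maximum degree $\Delta$ but $d_{G_i}(v)<\Delta$, so the pieces can be colored and glued.
\item In the $2$-connected, $\Delta$-regular, non-complete case, the induced path $u\,x\,w$ with $G-\{u,w\}$ connected saves one color at the last vertex $x$.
\end{itemize}

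Compared with citing Brooks, this buys a self-contained and algorithmic (greedy) proof at the cost of about a page.

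Two minor comments.

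First, the connectivity of $G-\{x,u,w\}$ does not actually require the end-blocks to be nontrivial. Deleting one non-cut vertex from each of two distinct leaf blocks of the block--cut tree of $G-x$ leaves it connected even when those leaf blocks are $K_2$. The hypothesis $\Delta\ge 3$ is really needed only to give $x$ a neighbor outside $\{u,w\}$, which is exactly where odd cycles fail.

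Second, the paper applies the theorem to induced subgraphs such as $G-v$ that may be disconnected, so it is worth adding the componentwise step. A component $H$ with $\Delta(H)\le 2$ has $\chi(H)\le 3\le \Delta(G)$. A component with $\Delta(H)\ge 3$ is either $K_{\Delta(H)+1}$, in which case $\chi(H)=\omega(H)\le\omega(G)$, or it is $\Delta(H)$-colorable by your connected version.
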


It is therefore natural to ask whether one can replace $\Delta(G)$ in Brooks' theorem by $\Delta(G)-1$ when $\omega(G)<\Delta(G)$. In 1977, Borodin and Kostochka~\cite{Borodin-Kostochka1977} proposed the following conjecture.

\begin{conjecture}[Borodin-Kostochka~\cite{Borodin-Kostochka1977}]\label{conj:BK}
If $G$ is a graph with $\Delta(G)\ge 9$, then $\chi(G)\le \max\{\Delta(G)-1,\omega(G)\}$.
\end{conjecture}

By Theorem~\ref{thm:brooks}, Conjecture~\ref{conj:BK} is trivial when $\omega(G)\ge \Delta(G)$. Thus it suffices to show that every graph $G$ with $\Delta(G)\ge 9$ and $\omega(G)\le \Delta(G)-1$ satisfies $\chi(G)\le \Delta(G)-1$. Reed~\cite{Reed1999} proved this for all graphs with $\Delta(G)\ge 10^{14}$. Conjecture~\ref{conj:BK} has also been verified for a number of hereditary graph classes defined by forbidding induced subgraphs; see, for example, \cite{Cranston-Rabern2013,Cranston-Lafayette-Rabern2022,Gupta-Pradhan2021,Lan-Liu-Zhou2024,Chen-Lan-Zhou2024,Wu-Wu2025,Chen-Lan-Lin-Zhou2024}. Most of these proofs reduce to the case when $\Delta=9$ and then analyze a minimal critical counterexample, following the approach of Catlin~\cite{Catlin1976} and Kostochka~\cite{Kostochka1980}.

In this paper, we study the colorings of general graphs with large maximum degree, in the spirit of Reed's asymptotic result. Our goal is to prove the existence of a $(\Delta-1)$-coloring under an improved degree assumption with explicit constants. The main result of this paper is the following.

\begin{theorem}\label{main}
Every graph $G$ with $\Delta(G)\ge 5.3\times 10^6$ satisfies
$\chi(G)\le \max\{\Delta(G)-1,\omega(G)\}$.
\end{theorem}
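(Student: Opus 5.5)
By Theorem~\ref{thm:brooks} we may assume $\omega(G)\le \Delta-1$, where $\Delta=\Delta(G)\ge 5.3\times 10^6$, and we aim to show $\chi(G)\le \Delta-1$. We take a minimal counterexample $G$, so $G$ is $(\Delta-1)$-vertex-critical: every proper induced subgraph is $(\Delta-1)$-colorable and every vertex has degree $\Delta-1$ or $\Delta$. Following Reed's strategy, we split the vertices into \emph{sparse} and \emph{dense} ones, but we track explicit constants rather than asymptotic ones. Fix a parameter $d=c\Delta$ with $c$ a small absolute constant, to be optimized at the end. A vertex $v$ is \emph{sparse} if $G[N(v)]$ misses at least $d\Delta$ edges, and otherwise \emph{dense}. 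We use the standard dense decomposition: the dense vertices partition into sets $D_1,\dots,D_\ell$ with the following properties. Each $D_i$ has size between $\Delta-O(d)$ and $\Delta+O(d)$. Each $D_i$ spans all but $O(d\Delta)$ edges. Each vertex of $D_i$ has at least $\tfrac34\Delta$ neighbours inside $D_i$. Few edges leave $D_i$.

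The first step is structural reduction on the dense sets, using criticality. We will show that a minimal counterexample contains no $D_i$ that is a near-clique of the dangerous type, namely one containing a clique $K$ of size $\Delta-1$ with almost all of its external neighbours concentrated. The standard Borodin--Kostochka reductions handle this: if a $(\Delta-1)$-clique $K$ exists, then each outside vertex has few neighbours in $K$, because $\omega\le\Delta-1$ and degrees are at most $\Delta$. We then colour $G-K$ by minimality and extend to $K$ through a matching or Hall-type argument, which needs the at most $\Delta$ external edges to leave enough flexibility. After these reductions, every $D_i$ has either a pair of nonadjacent vertices with many common neighbours, or a vertex with many external neighbours. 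Either property yields \emph{slack}: a colour that can be saved in $D_i$.

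The second step is a semi-random colouring, for example a naive wasteful colouring procedure run on $G$ with $\Delta-1$ colours, followed by the Lovász Local Lemma. For each sparse vertex $v$, many pairs of nonadjacent neighbours receive the same colour, so after uncolouring the conflicts $v$ still has about $\Omega(d^2/\Delta)$ more available colours than uncoloured neighbours. For each dense set $D_i$, we choose the random colouring so that two nonadjacent vertices of $D_i$ (or an external neighbour configuration) repeat a colour, again giving slack. Finally we colour the remaining vertices. The sparse vertices are handled greedily, ordered last so that their slack absorbs everything. The dense sets are handled by a matching argument within each $D_i$, completing the colouring of its interior as in Reed's $\omega,\Delta,\chi$ framework.

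The main obstacle is the concentration step. To make the Local Lemma work with $\Delta$ only about $5\times10^6$ instead of $10^{14}$, the tail bounds (Talagrand or McDiarmid type) must be applied with carefully explicit constants, together with a dependency degree of order $\Delta^4$. This requires $e\cdot p\cdot\Delta^4<1$ with $p\le \exp(-\Theta(d^2/\Delta^{\,2}\cdot\Delta))$. Substituting $d=c\Delta$ gives the condition $\exp(-\Theta(c^2\Delta))\Delta^4\ll1$. We would first attempt to reduce the number of bad events by conditioning only on vertices at distance two. We would also use the cleaner bounded-differences inequality for the count of repeated colours in neighbourhoods. We would then optimize $c$ against the structural losses of order $O(c\Delta)$ in the dense analysis, and the threshold $5.3\times10^6$ should come out of this optimization.
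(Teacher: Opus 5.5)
Your outline follows Reed's sparse/dense strategy, but it has a genuine gap in the dense case and it never establishes the stated threshold.

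\textbf{The dense case.} After your reductions you rely on a dichotomy: each $D_i$ has either a nonadjacent pair with many common neighbours, or a vertex with many external neighbours. This dichotomy fails in the central case, a clique $C$ of size $\Delta-p$ with $p$ small (already $p=1$). Criticality does not exclude such cliques. What it gives is Lemmas~\ref{p=1} and~\ref{p-3}: for each $v\in C$, all but at most one outside neighbour of $v$ has at most $p+3$ neighbours in $C$ (at most four when $p=1$). These facts come from colouring $G-C$ and extending, not from $\omega\le\Delta-1$ plus the degree bound, as your sketch suggests. Such a $C$ has no non-edges, and each of its vertices has only $p$ or $p+1$ outside neighbours. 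So neither of your slack sources exists, and a matching argument in the $\omega,\Delta,\chi$ style, which saves colours through non-edges inside the dense set, has nothing to work with.

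The idea you are missing is to create slack across the boundary of $C$:
\begin{enumerate}
\item Take many disjoint triples $(c_t;a_t,b_t)$ with $c_t\in C$ and $a_t,b_t$ outside neighbours of $c_t$, each having at most $p+3$ neighbours in $C$ (Lemma~\ref{lem:supplied-triples}).
\item In the random colouring, arrange that $a_t$ and $b_t$ receive distinct colours, each occurring exactly once on $C$, on vertices $x_t,y_t$ nonadjacent to $a_t,b_t$ respectively.
\item Arrange that the colour of $c_t$ is repeated inside $C$, so that $c_t$ is uncoloured.
\end{enumerate}
Then $c_t$ sees two distinct repeated colours, and two such centres per clique are exactly what the extension Lemma~\ref{coloring} needs.

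You also lack an analogue of Lemma~\ref{Hcupv}. It shows that in a minimal counterexample every induced subgraph on at most $\Delta+c$ vertices with minimum degree at least $4\Delta/5$ is a clique or a clique plus one vertex. Without it your $D_i$ remain arbitrary sets with $O(c\Delta^2)$ missing edges, and you never specify how their interiors are coloured.

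\textbf{The threshold.} The whole content of the theorem is the explicit bound $5.3\times10^6$, and you only assert that it `should come out' of optimising $c$. No computation is given. With a generic dense decomposition $c$ must be small, so the sparse-vertex tail bound $\exp(-\Theta(c^2\Delta))$ has a poor constant against a dependency degree of order $\Delta^4$ to $\Delta^5$. Nothing in your sketch shows this threshold is reachable.

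In the paper the good constant comes from Lemma~\ref{Hcupv}. It gives every vertex of $L$ at least $\Delta^2/50-\Delta$ non-edges in its neighbourhood (Lemma~\ref{lem:many-nonedges}), a large absolute density constant. The paper then proves explicit McDiarmid and negatively-associated Chernoff estimates for each of $A_v$, $E_i$, $F_i$. The weakest exponent, $1.5\cdot10^{-5}\Delta$ in the bound for $F_i$, set against $d=\Delta^5+2\Delta^4$ in the Local Lemma, is what fixes $5.3\times10^6$.
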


We first derive a structural decomposition of a minimal counterexample, isolating large cliques and near-cliques. We then use the Lov\'asz Local Lemma to obtain a partial coloring satisfying the hypotheses of a deterministic extension lemma.
The rest of the paper is organized as follows. In Section~\ref{decomposition}, we state the structural properties of a minimal counterexample and derive a partition that will be used in the later coloring argument.
Finally, in Section~\ref{proper-coloring}, we establish the probabilistic coloring step needed to complete the proof of Theorem~\ref{main}.

\section{Structure of a minimal counterexample to Theorem~\ref{main}}\label{decomposition}
In this section, we investigate the structure of a minimal counterexample to Theorem~\ref{main}.
A graph $G$ is called a \emph{minimal counterexample to Theorem~\ref{main} with maximum degree $\Delta$} if $\Delta(G)=\Delta$, $\omega(G)\le \Delta-1$, and $\chi(G)\ge \Delta$, and every graph $H$ with $\Delta(H)=\Delta, |V(H)|<|V(G)|$ and $\omega(H)\le \Delta-1$ satisfies $\chi(H)\le \Delta-1$. The purpose of this section is to derive a global partition that will later be used in the probabilistic coloring argument. We begin with a structural lemma due to Reed concerning the $(\Delta-1)$-cliques in a minimal counterexample.

\begin{lemma}[Reed \cite{Reed1999}]\label{p=1}
If $G$ is a minimal counterexample to Theorem~\ref{main} of maximum degree $\Delta$, and $K$ is a $(\Delta-1)$-clique of $G$, then no vertex of $G-K$ is adjacent to more than four vertices of $K$. Furthermore, at most four vertices of $K$ have degree $\Delta-1$.
\end{lemma}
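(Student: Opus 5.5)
The plan is a Hall-type extension argument. First we establish criticality. If $H$ is a proper induced subgraph of $G$ with $\Delta(H)=\Delta$, then $\omega(H)\le\Delta-1$ and minimality give $\chi(H)\le\Delta-1$. If instead $\Delta(H)<\Delta$, then Brooks' theorem (Theorem~\ref{thm:brooks}) gives $\chi(H)\le\max\{\Delta(H),\omega(H)\}\le\Delta-1$, since $\Delta$ is huge. So every proper induced subgraph of $G$ is $(\Delta-1)$-colorable. Consequently every vertex of $G$ has degree at least $\Delta-1$, since otherwise we could extend a coloring of $G-v$ greedily.

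Fix the $(\Delta-1)$-clique $K$. Each $u\in K$ has $\Delta-2$ neighbors inside $K$, hence at most two neighbors outside $K$. We take any $(\Delta-1)$-coloring $\varphi$ of $G-K$ and give each $u\in K$ the list $L(u)$ of colors not used by $\varphi$ on its outside neighbors, so $|L(u)|\ge\Delta-3$. Coloring $K$ is then a perfect matching problem between $K$ and the $\Delta-1$ colors. Since lists are large, Hall's condition can only fail for a set $X\subseteq K$ with $|X|\ge\Delta-2$ and $|\bigcup_{u\in X}L(u)|<|X|$. This leaves exactly two obstructions:
\begin{itemize}
\item[(a)] some color $c$ is missing from every list, i.e.\ every $u\in K$ has an outside neighbor colored $c$;
\item[(b)] some $\Delta-2$ vertices of $K$ all have the same list of size $\Delta-3$, i.e.\ they each see the same two colors $c_1,c_2$ outside.
\end{itemize}
The whole proof therefore consists of choosing $\varphi$ so as to avoid (a) and (b).

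For the first statement, suppose $v\notin K$ has at least five neighbors in $K$. Since $\omega(G)\le\Delta-1$, $v$ is not complete to $K$. Also $v$ has at most $\Delta-5$ neighbors in $G-K$, so in any coloring of $G-K$ the vertex $v$ has at least four admissible colors and can be recolored freely among them. Each obstruction is very uniform, so we use this freedom to break it:
\begin{itemize}
\item In case (a), every vertex of $K$, including the five neighbors of $v$, must see $c$. If $\varphi(v)=c$, recoloring $v$ to another admissible color $c'$ frees $c$ at those neighbors of $v$ that have no other $c$-colored outside neighbor.
\item In case (b), the lists of the neighbors of $v$ all contain $\varphi(v)$'s complement $\{c_1,c_2\}$. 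Changing $\varphi(v)$ makes the lists of $v$'s $\ge5$ neighbors differ from those of the non-neighbors of $v$ in $K$. Having at least five neighbors is what guarantees that at least one of these neighbors lies in the $\Delta-2$-set and that we can choose $c'$ to avoid both recreating the obstruction and the colors of the at most one other outside neighbor of each affected vertex.
\end{itemize}
If recoloring $v$ alone is not enough, we will instead pick $x\in K$ nonadjacent to $v$ and color $G-(K\setminus\{x\})$, or a graph obtained by identifying $v$ with $x$ (this preserves $\Delta$ and clique number bounds). That forces $v$ and $x$ to share a color, so that color is freed for the neighbors of $v$ in $K$.

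For the second statement, suppose at least five vertices $u_1,\dots,u_5\in K$ have degree $\Delta-1$. Each of them has exactly one outside neighbor, hence a list of size $\ge\Delta-2$. A Hall failure then requires $\Delta-2$ vertices with identical lists, and this involves many of the $u_i$. Using the first statement, the outside neighbors of the $u_i$ are spread over several vertices. We would recolor one of these outside neighbors, or color $G-K$ after deleting an edge from some $u_i$ to the outside, to destroy the uniformity as before.

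The main obstacle is the case analysis in case (b) and in the second statement: we must ensure that the single recoloring does not create a new Hall obstruction among the other vertices of $K$. I would first try to handle this by a careful count of how many vertices of $K$ see each color outside.
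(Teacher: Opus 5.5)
The paper quotes this lemma from Reed without proof, so your argument has to stand on its own. As written, it has a genuine gap.

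The set-up is correct: criticality, $|L(u)|\ge\Delta-3$, and the reduction of any Hall failure to (a) or (b). But the lemma \emph{is} the assertion that (a) and (b) can be avoided. Here you only say that ``uniformity'' lets you break them, and you leave the main obstacle explicitly open. The hypothesis of five neighbors is never used in an essential way.

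The concrete step you propose also fails.
\begin{itemize}
\item In case (b), suppose $N_K(v)=K\setminus\{x\}$, every $u\in K\setminus\{x\}$ has a second outside neighbor colored $c_2$, and $v$ itself has a neighbor colored $c_2$. Then every admissible $c'$ differs from $c_2$. After recoloring $v$ to $c'$, the same $\Delta-2$ vertices all see exactly $\{c',c_2\}$, so (b) recurs.
\item In case (a) with $\varphi(v)\ne c$, recoloring $v$ removes $c$ from no list. Even when $\varphi(v)=c$, the second outside neighbors of $N_K(v)$ may also carry $c$, since they need not be adjacent to $v$. The new color may also create a fresh obstruction.
\end{itemize}

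Your fallback is the right kind of move: identify $v$ with a non-neighbor $x\in K$ inside $G-(K\setminus\{x\})$. The merged vertex has degree at most $\Delta-3$, so the auxiliary graph is smaller, has maximum degree at most $\Delta$ and no $K_\Delta$, and is $(\Delta-1)$-colorable. Each vertex of $N_K(v)$ then misses at most one color of the palette $[\Delta-1]\setminus\{c_0\}$, where $c_0$ is the shared color, so obstruction (b) disappears.

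Obstruction (a) survives, however. It is a color $c\ne c_0$ seen by every vertex of $K\setminus\{x\}$; for $u\in N_K(v)$ it must sit on the second outside neighbor $y_u\ne v$. Nothing in the auxiliary graph stops all $y_u$ from receiving the same color. Excluding this needs a further constraint, such as an added edge $y_uy_{u'}$ or a suitable choice of $c_0$, together with a check that no $K_\Delta$ is created. The cases where that check fails are exactly what a proof must handle: all $y_u$ equal, or $y_u,y_{u'}$ together with a common $(\Delta-2)$-clique forming $K_\Delta$ minus an edge. Your proposal never reaches them.

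For the second statement, your Hall analysis is backwards. If $u_1,\dots,u_5\in K$ have degree $\Delta-1$, their lists have size at least $\Delta-2$. Obstruction (b) needs $\Delta-2$ vertices with lists of size exactly $\Delta-3$, and at most $\Delta-6$ vertices could have such lists, so (b) is impossible. The only obstruction is (a), which forces the unique outside neighbors $w_1,\dots,w_5$ to share one color. Also, ``coloring $G-K$ after deleting an edge from some $u_i$ to the outside'' changes nothing, since such edges do not lie in $G-K$.

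The step that works is \emph{adding} an edge. By the first statement the $w_i$ are not all equal, so take $w_i\ne w_j$. If $w_iw_j\in E(G)$ you are done. Otherwise $G-K+w_iw_j$ is smaller and has maximum degree at most $\Delta$, because each $w$ lost a neighbor in $K$. A $\Delta$-clique $Q$ in it must use the new edge, and then $Q\setminus\{w_j\}$ is a $(\Delta-1)$-clique of $G$ in which the outside vertex $w_j$ has $\Delta-2>4$ neighbors, contradicting the first statement.

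So the second part is routine once the first is established, and the first is exactly where your argument is missing.
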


Besides these structural restrictions on $(\Delta-1)$-cliques, we shall also use the following simple consequence of the minimality of $G$ throughout this section.

\begin{observation}\label{obs:critical}
Let $G$ be a minimal counterexample to Theorem~\ref{main} with maximum degree $\Delta$. Then $G$ is $\Delta$-critical, and hence $\delta(G)\ge \Delta-1$.
\end{observation}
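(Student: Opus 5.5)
The plan is to check two things directly from the definition of a minimal counterexample. First, that $\chi(G)=\Delta$ and every proper subgraph of $G$ is $(\Delta-1)$-colorable, which is what $\Delta$-criticality means. Second, that the usual degree argument then gives $\delta(G)\ge \Delta-1$.

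First I pin down $\chi(G)$. Since $\Delta=\Delta(G)\ge 5.3\times 10^6\ge 3$ and $\omega(G)\le\Delta-1$, Theorem~\ref{thm:brooks} gives $\chi(G)\le\max\{\Delta,\omega(G)\}=\Delta$. Combined with $\chi(G)\ge\Delta$ from the definition, this yields $\chi(G)=\Delta$.

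Next I show that every proper subgraph is $(\Delta-1)$-colorable. It suffices to treat $G-v$ and $G-e$, since every proper subgraph is contained in one of these (and $G$ is connected with at least one edge).

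For vertex deletion, let $H=G-v$. Then $\omega(H)\le\Delta-1$ and $|V(H)|<|V(G)|$, but $\Delta(H)$ may be smaller than $\Delta$, so minimality does not apply directly. I would pad $H$ to a graph $H'$ as follows: take disjoint copies of $H$ and join vertices of degree less than $\Delta$ across copies, or add pendant gadgets, so that the maximum degree becomes exactly $\Delta$ without creating a $\Delta$-clique. The cleanest padding is to attach to $H$ a disjoint copy of $K_{1,\Delta}$. This gives maximum degree exactly $\Delta$ and clique number $\max\{\omega(H),2\}\le\Delta-1$. The vertex count is $|V(H)|+\Delta+1$, however, which may exceed $|V(G)|$.

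Instead I would argue more simply. If $\Delta(H)=\Delta$, apply minimality directly. If $\Delta(H)\le\Delta-1$, then either $\omega(H)\le\Delta(H)$, in which case Theorem~\ref{thm:brooks} gives $\chi(H)\le\max\{\Delta(H),\omega(H)\}\le\Delta-1$ (using $\Delta(H)\ge3$, or trivially if $\Delta(H)\le 2$), or $\omega(H)=\Delta(H)+1$, in which case $\chi(H)=\omega(H)\le\Delta-1$. In all cases $\chi(H)\le\Delta-1$. The same case split handles $G-e$, noting that $\omega(G-e)\le\omega(G)\le\Delta-1$ and $|V(G-e)|=|V(G)|$.

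For the edge case, minimality is stated in terms of vertex count, so I would reduce to vertex-critical first. Take a $\Delta$-critical subgraph $G_0\subseteq G$ with $\chi(G_0)=\Delta$. If $G_0\ne G$ as a vertex set, this contradicts the vertex case, so $V(G_0)=V(G)$. Then I would interpret criticality as vertex-criticality, which is what the degree bound needs. I expect this edge-versus-vertex bookkeeping to be the only delicate point, and vertex-criticality suffices for the consequence.

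Finally, the degree bound. Suppose some $v$ has $d(v)\le\Delta-2$. Color $G-v$ with $\Delta-1$ colors. The neighbors of $v$ use at most $\Delta-2$ colors, so $v$ can receive a free color. This gives $\chi(G)\le\Delta-1$, a contradiction. Hence $\delta(G)\ge\Delta-1$.
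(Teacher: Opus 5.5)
Your final argument is essentially the paper's. Brooks' theorem gives $\chi(G)=\Delta$. For vertex-criticality, either $\Delta(G-v)=\Delta$ and minimality applies, or $\Delta(G-v)\le\Delta-1$ and Brooks gives $\chi(G-v)\le\max\{\Delta(G-v),\omega(G-v)\}\le\Delta-1$; this bound already covers your subcase $\omega(H)=\Delta(H)+1$. The greedy extension then gives $\delta(G)\ge\Delta-1$.

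The padding paragraph and the detour through a critical subgraph $G_0$ are unnecessary. You should also remove the sentence claiming that ``the same case split handles $G-e$'', because it is false. $G-e$ has as many vertices as $G$, so when $\Delta(G-e)=\Delta$ the minimality hypothesis gives no information. Like the paper, you obtain only vertex-criticality, and that is all the degree bound needs.
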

\begin{proof}
Since $G$ is a minimal counterexample, Theorem~\ref{thm:brooks} implies that $\omega(G)\le \Delta-1$ and $\chi(G)=\Delta$. It suffices to show that $\chi(G-v)\le \Delta-1$ for every $v\in V(G)$. Suppose to the contrary that there exists a vertex $v\in V(G)$ such that $\chi(G-v)=\Delta$. Note that $\omega(G-v)\le \omega(G)\le \Delta-1$. If $\Delta(G-v)\le \Delta-1$, then this contradicts Theorem~\ref{thm:brooks}. Therefore, $\Delta(G-v)=\Delta$. But then $G-v$ is a smaller graph with maximum degree $\Delta$, clique number at most $\Delta-1$, and chromatic number $\Delta$, contradicting the minimality of $G$.
\end{proof}

We shall use the following consequence repeatedly: every proper induced subgraph of $G$ is $(\Delta-1)$-colorable. Indeed, if such a subgraph has maximum degree at most $\Delta-1$, this follows from Brooks' theorem; otherwise it has maximum degree $\Delta$ and is smaller than $G$, so it is $(\Delta-1)$-colorable by the minimality of $G$.

We next establish a dense global structure for a minimal counterexample. The following lemma is the key structural input in the paper: it shows that every sufficiently dense induced subgraph of bounded size is forced to be either a clique or a clique together with one extra vertex. Later, this will allow us to partition the whole graph into large clique-like pieces and a sparse remainder.
\begin{lemma}\label{Hcupv}
Let $G$ be a minimal counterexample to Theorem~\ref{main}, and let $H$ be an induced subgraph of $G$ with at most $\Delta+c$ vertices, where $6\le c\le \Delta/10$. Suppose that every vertex of $H$ has at least $4\Delta/5$ neighbors in $H$. Then $H$ is either a clique, or consists of a clique $C_H$ with fewer than $\Delta-1$ vertices together with a vertex $v_H$.
\end{lemma}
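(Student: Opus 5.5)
The plan is to argue by contradiction using criticality (Observation~\ref{obs:critical}): every proper induced subgraph of $G$ is $(\Delta-1)$-colorable. Suppose $H$ is neither a clique nor a clique plus one vertex. Fix a $(\Delta-1)$-coloring $\varphi$ of $G-V(H)$ and try to extend it to $H$; success contradicts $\chi(G)=\Delta$.

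\textbf{Lists.} For $v\in V(H)$, let $L(v)$ be the set of colors in $\{1,\dots,\Delta-1\}$ not used by $\varphi$ on $N_G(v)\setminus V(H)$. Since $\deg_G(v)\le\Delta$, we get $|L(v)|\ge \Delta-1-(\Delta-d_H(v))=d_H(v)-1$. So we must list-color $H$ from lists that are one short of degree-size. Moreover $d_H(v)\ge 4\Delta/5$ and $|V(H)|\le\Delta+c\le 11\Delta/10$. Hence any two lists have at least $2(4\Delta/5-1)-(\Delta-1)>\Delta/2$ common colors.

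\textbf{Coloring scheme.} Find a vertex $z\in V(H)$ whose neighborhood $N_H(z)$ contains two disjoint non-edges $x_1y_1$ and $x_2y_2$.
\begin{enumerate}[label=(\roman*)]
\item Give $x_i,y_i$ a common color $\alpha_i\in L(x_i)\cap L(y_i)$, with $\alpha_1\ne\alpha_2$.
\item Order the remaining vertices of $H$ so that $z$ is last.
\item Arrange the order so that every vertex other than $z$ and its immediate predecessors has at least two neighbors later in the order. This is possible since $H$ is dense: take a decreasing-distance order from $z$ and use the fact that any two vertices of $H$ have more than $\Delta/2$ common neighbors.
\item Color greedily in this order.
\end{enumerate}

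\textbf{Counting.}
\begin{itemize}
\item A vertex $v\notin\{x_i,y_i,z\}$ with two uncolored later neighbors sees at most $d_H(v)-2<|L(v)|$ colored neighbors, so it can be colored.
\item The pre-colored vertices $x_i,y_i$ have their own list colors, and the two pairs use distinct colors, so $\varphi$ stays proper.
\item At $z$, the four neighbors $x_1,y_1,x_2,y_2$ use only two colors, which saves $2$. So $z$ sees at most $d_H(z)-2$ colors.
\item The last few vertices before $z$ are handled by placing them among $N_H(z)\setminus\{x_i,y_i\}$ with $z$ uncolored as a later neighbor, and by choosing the order among them carefully.
\end{itemize}
This is routine once the four vertices $x_1,y_1,x_2,y_2$ and $z$ are found.

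\textbf{Main obstacle: the case with no such $z$.} Here, for every $z$, the complement of $H[N_H(z)]$ has no matching of size $2$. So each $\overline{H}[N_H(z)]$ is a star or a triangle, or is edgeless.
\begin{itemize}
\item First, show that the non-edges of $H$ (which exist, since $H$ is not a clique) all go through a single vertex $v_H$. Suppose two disjoint non-edges $ab$ and $cd$ exist. Then $a,b,c,d$ have more than $\Delta/5$ common neighbors in $H$ by the density bounds, and any common neighbor is a valid $z$, a contradiction. A triangle of non-edges is excluded similarly, using one extra common neighbor together with a slightly modified saving argument: three pairwise non-adjacent vertices $a,b,c$ all receive a single common color, which again saves $2$ at $z$.
\item So $\overline{H}$ is a star centered at some $v_H$, and $C_H=V(H)\setminus\{v_H\}$ is a clique with $v_H$ an extra vertex.
\item Finally, bound $|C_H|$. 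Since $\omega(G)\le\Delta-1$, we have $|C_H|\le\Delta-1$. If $|C_H|=\Delta-1$, then $v_H$ is adjacent to at least $4\Delta/5-0>4$ vertices of the $(\Delta-1)$-clique $C_H$, contradicting Lemma~\ref{p=1}. Hence $|C_H|<\Delta-1$, as claimed.
\end{itemize}

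I expect the delicate points to be the end of the greedy ordering and the exclusion of the triangle configuration. The hypothesis $c\ge 6$ and the bound $|V(H)|\le\Delta+c$ should enter exactly in guaranteeing enough common neighbors and common list colors there.
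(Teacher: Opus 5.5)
\textbf{Gap: the choice of $z$ for two disjoint non-edges.} You rule out two disjoint non-edges $ab$, $cd$ by taking a common neighbor of $a,b,c,d$ as $z$. The hypotheses $d_H\ge 4\Delta/5$ and $|V(H)|\le\Delta+c$ do not give ``more than $\Delta/5$'' such common neighbors. Inclusion--exclusion only gives
\[
|N_H(a)\cap N_H(b)\cap N_H(c)\cap N_H(d)|\ \ge\ 4\cdot\tfrac{4\Delta}{5}-3(\Delta+c)\ =\ \tfrac{\Delta}{5}-3c,
\]
which is negative once $c>\Delta/15$. The intersection can really be empty. Take $|V(H)|=1.1\Delta$, split $V(H)\setminus\{a,b,c,d\}$ into four parts of size about $0.275\Delta$, and let each of $a,b,c,d$ miss exactly its own part and its partner. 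All degrees stay above $4\Delta/5$, yet $a,b,c,d$ have no common neighbor. So for $c$ in the permitted range your argument does not produce $z$.

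\textbf{The missing dichotomy.} Only \emph{pairwise} common neighborhoods $S_{x,y}=N_H(x)\cap N_H(y)$ are reliably large: $|S_{x,y}|\ge 3\Delta/5-c\ge 2\Delta/5$. The paper splits into two cases.
\begin{itemize}
\item If $H$ has three disjoint non-edges, the three sets $S_{x_i,y_i}$ have total size at least $6\Delta/5>|V(H)|$. By pigeonhole, two of them share at least two vertices $a,b$, and $N_H(a)\cap N_H(b)$ contains two disjoint non-edges.
\item If $H$ has no three disjoint non-edges, then deleting at most four vertices leaves a clique, and you must use that clique. The paper places two of the four special vertices inside a maximum clique $C_H$, so only the non-neighbors of the two outside vertices are lost. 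Alternatively, in this case $|V(H)|\le\omega(G)+4\le\Delta+3$, and your four-fold bound becomes $\Delta/5-9>0$.
\end{itemize}
With this dichotomy in place, the rest of your outline goes through as in the paper: the reduction of $\overline H$ to a star or a triangle, the stable-triple case (whose three-fold common neighborhood has size at least $2\Delta/5-2c>0$), and the final appeal to Lemma~\ref{p=1}.

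\textbf{Smaller point: the vertex colored just before $z$.} In your ordering, that vertex has only $z$ as an uncolored neighbor. It may therefore see $\Delta-1$ colored neighbors with distinct colors, and reordering alone cannot help. It needs one more saving. For example, choose it in $N_H(z)\cap N_H(x_1)\cap N_H(y_1)$, which has size at least $2\Delta/5-2c>0$. Alternatively, as the paper does, reserve two vertices $a,b$, each adjacent to all of $x_1,y_1,x_2,y_2$, and color them last; this is exactly what the pigeonhole step supplies. This point is easily repaired, unlike the first.
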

\begin{proof}[\bf Proof]
        For each pair $\{x,y\}\subseteq V(H)$, let $S_{x,y}=N_H(x)\cap N_H(y)$. 
        Since $d_H(x),d_H(y)\ge 4\Delta/5$ and $|H|\le \Delta+c$, we have
		\[
		|S_{x,y}|\ge d_H(x)+d_H(y)-|H|\ge \frac{3}{5}\Delta-c.
		\]
		In particular, $|S_{x,y}|\ge 2\Delta/5$ for every pair $\{x,y\}\subseteq V(H)$.
		
		Thus, if $H$ has three disjoint pairs of nonadjacent vertices $(x_1,y_1),\ldots,(x_3,y_3)$, then
		\[
		|S_{x_1,y_1}|+|S_{x_2,y_2}|+|S_{x_3,y_3}|\ge \frac{6}{5}\Delta.
		\]
		Since $|H|\le \Delta+c$, it follows that at least $\frac{1}{10}\Delta-\frac{c}{2}$ vertices of $H$ belong to at least two of the sets $S_{x_i,y_i}$. In particular, there are at least $100$ such vertices.
		Since there are only three possible intersections among the sets $S_{x_i,y_i}$, one of them contains at least two vertices. Let $a$ and $b$ be two vertices in such an intersection. Then $N_H(a)\cap N_H(b)$ contains two disjoint pairs of nonadjacent vertices, say $(x,y)$ and $(v,w)$. The graph $G-V(H)$ is a proper induced subgraph of $G$, and hence it has a $(\Delta-1)$-coloring; fix one and call it $f$. Since every vertex of $H$ has at least $4\Delta/5$ neighbors in $H$, each of $x,y,v,w$ has at most $\Delta/5$ neighbors in $G-V(H)$, and hence at least $4\Delta/5-1$ available colors with respect to $f$. The available color lists of $x$ and $y$ therefore intersect in at least
		\[
		2(4\Delta/5-1)-(\Delta-1)=3\Delta/5-1
		\]
		colors. Choose one such color for both $x$ and $y$. Similarly, the available color lists of $v$ and $w$ intersect in at least $3\Delta/5-1$ colors, so we may choose a common color for $v$ and $w$ different from the color used on $x$ and $y$. Thus we may extend $f$ to a coloring of $G-(V(H)\setminus\{x,y,v,w\})$ in which $x$ and $y$ receive the same color, and $v$ and $w$ receive the same color.
		Let
		\[
		S=\{a,b\}\cup \bigl(N_H(a)\cap N_H(b)\bigr)\setminus\{x,y,v,w\},
		\]
		and let
		\[
		T=\{u\in V(H)\setminus S:\ |N(u)\cap S|\ge 2\}.
		\]
        
		We note that the minimum degree condition ensures that $|S|\ge 2\Delta/5$. If $|S|\le 3\Delta/5-1$, then each vertex of $S$ is adjacent to at least $4\Delta/5-(|S|-1)=4\Delta/5+1-|S|$ vertices in  $H-S$. Hence $e(S,H-S)\ge |S|(4\Delta/5+1-|S|).$ On the other hand, every vertex of $T$ has at most $|S|$ neighbors in $S$, while every vertex of $H-S-T$ has at most one neighbor in $S$. Therefore $e(S,H-S)\le |T||S|+|H-S-T|.$ Since
		\[
		|H-S-T|=|H|-|S|-|T|\le \Delta+c-|S|-|T|,
		\]
		we obtain
		\[
		|S|(4\Delta/5+1-|S|)\le |T||S|+\Delta+c-|S|-|T|.
		\]
		If $|S\cup T|<3\Delta/5$, then $|T|<3\Delta/5-|S|$, and so
		\[
		|S|(4\Delta/5+1-|S|)<|S|(3\Delta/5-|S|)+\Delta+c-|S|.
		\]
		Thus $
		2\Delta/5(\Delta/5+2)<|S|(\Delta/5+2)<\Delta+c,$
		which is impossible when $\Delta\ge 1.0\times 10^6$ and $c\le \Delta/10$. Therefore, we have $|S\cup T|\ge 3\Delta/5$.
Since $\delta(H)\ge 4\Delta/5$ and $|S\cup T|\ge 3\Delta/5$,
		every vertex in $V(H)\setminus (S\cup T)$ has at least $2\Delta/5-c$ neighbors in $S\cup T$. 
		
		Now we color the still uncolored vertices of $H$ in four stages, keeping the already colored vertices $x,y,v,w$ fixed. First, we greedily color the still uncolored vertices of $V(H)\setminus (S\cup T)$. This is possible since each such vertex has at least $2\Delta/5-c$ neighbors in $S\cup T$; even if all four of $x,y,v,w$ lie in $T$, at least $2\Delta/5-c-4\ge2$ of these neighbors are still uncolored. Next, we greedily color the still uncolored vertices of $T$. This is possible since each vertex of $T$ has at least two neighbors in $S$, and all vertices of $S$ are still uncolored at this stage. Then we greedily color the vertices of $S\setminus\{a,b\}$. This is possible because every vertex in $S\setminus\{a,b\}$ is adjacent to both $a$ and $b$, which are still uncolored. Finally, we color $a$ and $b$. Since both $a$ and $b$ are adjacent to $x,y,v,w$, and these four vertices use only two colors, each of $a$ and $b$ has an available color. Thus we obtain a $(\Delta-1)$-coloring of $G$, a contradiction.
		
		So we may assume that $H$ contains no three disjoint pairs of nonadjacent vertices. Let $M$ be a maximum family of pairwise disjoint nonadjacent pairs in $H$. Then $|M|\le 2$. Deleting all vertices contained in pairs of $M$, the remaining graph is a clique; otherwise, there exists another nonadjacent pair that could be added to $M$. Hence $H$ contains a clique of size at least $|V(H)|-4$. In particular, if $C_H$ is a maximum clique of $H$, then $|V(H)\setminus C_H|\le 4$.
		
		Now suppose that $|V(H)\setminus C_H|\ge 2$. Let $u,v\in V(H)\setminus C_H$ be distinct. Since $C_H$ is a maximum clique, there exist $a,b\in C_H$ such that $ua,vb\notin E(H)$. If $a\ne b$, then $\{u,a\}$ and $\{v,b\}$ are two disjoint pairs of nonadjacent vertices. If $a=b$, then $uv\notin E(H)$, for otherwise $(C_H\setminus\{a\})\cup\{u,v\}$ would be a clique larger than $C_H$. Hence $\{a,u,v\}$ is a stable set of size three. Therefore, in either case, there exists a set $X$ of four vertices of $H$ such that $H[X]$ is $2$-colorable and $|X\cap C_H|= 2$.
		
		By the minimality of $G$ and Brooks' theorem, the graph $G-V(H)$ admits a $(\Delta-1)$-coloring. We extend this coloring by first coloring the vertices of $X$ with only two colors. We next justify that at least two vertices of $C_H\setminus X$ are adjacent to every vertex of $X$. The two vertices in $X\cap C_H$ are adjacent to all vertices of $C_H\setminus X$. Each vertex in $X\setminus C_H$ has at most
		\[
		|H|-1-4\Delta/5\le \Delta/5+c-1
		\]
		nonneighbors in $H$, and hence excludes at most this many vertices of $C_H$. Since $|X\setminus C_H|\le2$ and $|C_H|\ge |H|-4\ge4\Delta/5-3$, the number of vertices of $C_H\setminus X$ adjacent to all vertices of $X$ is at least
		\[
		(4\Delta/5-3)-2-2(\Delta/5+c-1)=2\Delta/5-2c-3>2,
		\]
		where we use $c\le \Delta/10$ and $\Delta\ge 5.3\times10^6$. Choose two such vertices, leave them until the end, and greedily color the remaining uncolored vertices of $V(H)\setminus X$. Since $X$ uses only two colors, each of these last two vertices has an available color. Thus we obtain a $(\Delta-1)$-coloring of $G$, a contradiction.
		
		Therefore, $|V(H)\setminus C_H|\le 1$. Hence, $H$ is either a clique, or consists of a clique $C_H$ together with a vertex $v_H$. Furthermore, if $H$ is not a clique, then by Lemma~\ref{p=1} we have $|C_H|\le \Delta-2$. This completes the proof of the lemma.
\end{proof}

Let $\mathcal{C}$ be the set of maximal cliques of $G$ with at least $4\Delta/5+1$ vertices. As a consequence of Lemma~\ref{Hcupv}, we have the following corollary.
\begin{corollary}\label{C1C2}
If two distinct elements $C_1$ and $C_2$ of $\mathcal{C}$ with $|C_1|\le |C_2|$ intersect, then $|C_1\setminus C_2|\le 1$. Moreover, no element of $\mathcal{C}$ intersects two other elements of $\mathcal{C}$.
\end{corollary}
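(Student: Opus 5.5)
Both parts follow from Lemma~\ref{Hcupv} applied to $H=G[C_1\cup C_2]$ (or to the union of three cliques), after checking its hypotheses. Throughout, $|C|\le \omega(G)\le\Delta-1$ for every $C\in\mathcal{C}$.

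\textbf{First part.} Let $C_1,C_2\in\mathcal{C}$ be distinct and intersecting, with $|C_1|\le|C_2|$, and fix $z\in C_1\cap C_2$. Since $z$ is adjacent to every other vertex of $C_1\cup C_2$, we have $|C_1\cup C_2|-1\le d(z)\le\Delta$, so $|C_1\cup C_2|\le\Delta+1$. Let $H=G[C_1\cup C_2]$ and take $c=6$. Every vertex of $H$ lies in $C_1$ or $C_2$, so it has at least $4\Delta/5$ neighbours in $H$. By Lemma~\ref{Hcupv}, $H$ is either a clique or a clique $C_H$ plus one vertex $v_H$.
\begin{itemize}
\item $H$ cannot be a clique: it would properly contain the maximal clique $C_1$ or $C_2$, since they are distinct.
\item So $H=C_H\cup\{v_H\}$. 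Then $C_1\setminus\{v_H\}$ and $C_2\setminus\{v_H\}$ are both contained in $C_H$.
\item If $v_H\notin C_1$, then $C_1\subseteq C_H$. Since $C_2$ is maximal and distinct from $C_1$, some vertex of $C_2\setminus C_1$ is nonadjacent to some vertex of $C_1$. This vertex must be $v_H$, because every other vertex of $H$ is in the clique $C_H$. Hence $C_2\setminus C_1=\{v_H\}$.
\item If $v_H\in C_1\setminus C_2$, the symmetric argument gives $C_1\setminus C_2=\{v_H\}$.
\item If $v_H\in C_1\cap C_2$, then $v_H$ is adjacent to all of $H$, so $H$ is a clique, a contradiction.
\end{itemize}
Thus one of $|C_1\setminus C_2|$, $|C_2\setminus C_1|$ equals $1$ (the other could be larger). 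Since $|C_1\setminus C_2|-|C_2\setminus C_1|=|C_1|-|C_2|\le 0$, we get $|C_1\setminus C_2|\le|C_2\setminus C_1|$. So either $|C_1\setminus C_2|=1$ directly, or $|C_1\setminus C_2|\le|C_2\setminus C_1|=1$. In both cases $|C_1\setminus C_2|\le 1$.

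\textbf{Second part.} Suppose $C\in\mathcal{C}$ meets two other elements $C',C''\in\mathcal{C}$. By the first part, any two intersecting members of $\mathcal{C}$ differ by exactly one vertex on their smaller side, so $C\cup C'$ is a clique plus one vertex (as shown above). Let $H=G[C\cup C'\cup C'']$. Each pair differs by at most one vertex from the larger clique, and the sizes are at least $4\Delta/5+1$. Hence $|H|\le\max(|C|,|C'|,|C''|)+2\le\Delta+1$, and each vertex has at least $4\Delta/5$ neighbours in $H$.

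Lemma~\ref{Hcupv} then says $H$ is a clique plus at most one vertex. Such a graph has at most two maximal cliques of size at least $4\Delta/5+1$: the clique $C_H$, and $v_H$ together with its neighbourhood in $C_H$. But $C,C',C''$ are three distinct maximal cliques of $G$, each contained in $H$ and hence maximal in $H$, which is a contradiction.

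\textbf{Expected obstacle.} The main difficulty is the bookkeeping in the second part, specifically bounding $|H|\le\Delta+c$. This needs a careful case analysis of where the extra vertices lie, e.g. whether $C'\cap C''$ is empty. If that becomes messy, I would instead apply Lemma~\ref{Hcupv} to the pairs $C\cup C'$ and $C\cup C''$ and derive a contradiction from Lemma~\ref{p=1}, using that at most four vertices of a $(\Delta-1)$-clique have degree $\Delta-1$.
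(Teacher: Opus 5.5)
Your first part is correct and is essentially the paper's argument. The paper identifies $C_1,C_2$ with the two maximal cliques $K$ and $N_K(u)\cup\{u\}$ of the near-clique, and your case split on the position of $v_H$ amounts to the same thing. One small repair there: from a single vertex of $C_2\setminus C_1$ being $v_H$ you conclude $C_2\setminus C_1=\{v_H\}$. The clean reason is that $C_1\subseteq C_H$ and maximality of $C_1$ force $C_1=C_H$, so $C_2\setminus C_1\subseteq V(H)\setminus C_H=\{v_H\}$.

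The genuine gap is in the second part, at exactly the step you flagged. The bound $|H|\le\max(|C|,|C'|,|C''|)+2$ does not follow from your justification that each pair differs by at most one vertex from the larger clique. That information is available only for intersecting pairs, and $C'\cap C''\neq\emptyset$ is not part of the hypothesis. Moreover, if $C$ is the smallest of the three, the first part controls only $|C\setminus C'|$ and $|C\setminus C''|$. The sets $C'\setminus C$ and $C''\setminus C$ may a priori each have close to $\Delta/5$ vertices. Counting $|C|+|C'\setminus C|+|C''\setminus C|$ then only gives $|H|\le |C'|+|C''|-|C|+2$. For $|C|\approx 4\Delta/5$ and $|C'|,|C''|\approx\Delta$ this is about $6\Delta/5$, far above $\Delta+c$ for any admissible $c\le\Delta/10$. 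So Lemma~\ref{Hcupv} cannot yet be applied.

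The missing idea is that the three cliques have a common vertex. Every intersecting pair $P,Q\in\mathcal C$ satisfies $|P\cap Q|\ge |P|+|Q|-(\Delta+1)\ge 3\Delta/5+1$; your first part even gives $|P\cap Q|\ge 4\Delta/5$. Hence $C\cap C'$ and $C\cap C''$ are subsets of $C$ whose sizes sum to more than $\Delta-1\ge|C|$, so they meet. A vertex $z\in C\cap C'\cap C''$ is adjacent to all other vertices of $H$, which gives $|H|\le d(z)+1\le\Delta+1$ at once; this is how the paper proceeds. Alternatively, $z$ shows $C'\cap C''\neq\emptyset$, and applying the first part to $(C',C'')$ completes your case analysis toward $\max+2$.

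With this inserted, the rest of your second part coincides with the paper's argument: a clique plus one vertex has at most two maximal cliques, while $C,C',C''$ would be three distinct maximal cliques of $H$. The fallback through Lemma~\ref{p=1} is then unnecessary.
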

\begin{proof}
Let $C_1,C_2\in\mathcal C$ be distinct and intersecting, and put $H=G[C_1\cup C_2]$. If $z\in C_1\cap C_2$, then $z$ is adjacent to every vertex of $C_1\cup C_2$ except itself; hence $|C_1\cup C_2|\le \Delta+1$. Also every vertex of $H$ has at least $4\Delta/5$ neighbors in its own clique. Since $|H|\le \Delta+1\le \Delta+6$, Lemma~\ref{Hcupv}, applied with $c=6$, implies that $H$ is either a clique or a clique together with one extra vertex. The first case is impossible because $C_1$ and $C_2$ are distinct maximal cliques. Therefore $H=K\cup\{u\}$ for some clique $K$.

The maximal cliques of the near-clique $H=K\cup\{u\}$ are precisely $K$ and $N_K(u)\cup\{u\}$. Since $C_1$ and $C_2$ are maximal cliques of $G$ and are contained in $H$, they are also maximal cliques of $H$; hence they are these two cliques. If $C_1=N_K(u)\cup\{u\}$ and $C_2=K$, then $C_1\setminus C_2=\{u\}$. If $C_1=K$ and $C_2=N_K(u)\cup\{u\}$, then the assumption $|C_1|\le |C_2|$ gives $|K|\le |N_K(u)|+1$, and so $|K\setminus N_K(u)|\le1$; hence again $|C_1\setminus C_2|\le1$.

It remains to rule out triple intersections in the intersection graph of $\mathcal C$. Suppose that some $C\in\mathcal C$ intersects two distinct elements $D,E\in\mathcal C$. For any intersecting pair $P,Q\in\mathcal C$, the degree of a vertex in $P\cap Q$ gives $|P\cup Q|\le\Delta+1$, and hence
\[
|P\cap Q|\ge |P|+|Q|-(\Delta+1)\ge 3\Delta/5+1.
\]
Consequently $(C\cap D)\cap(C\cap E)\ne\emptyset$. Let $z$ be a vertex in this triple intersection and set $H'=G[C\cup D\cup E]$. Then $|H'|\le\Delta+1$ and $\delta(H')\ge4\Delta/5$, so Lemma~\ref{Hcupv} again implies that $H'$ is a clique or a clique together with one extra vertex. The clique case contradicts maximality. In the near-clique case, say $H'=K\cup\{u\}$ with $K$ a clique, the only maximal cliques of $H'$ are $K$ and $N_{H'}(u)\cup\{u\}$. Every member of $\mathcal C$ contained in $H'$ is a maximal clique of $H'$, for otherwise it could be enlarged in $G$. Hence $H'$ contains at most two maximal cliques of $G$ from $\mathcal C$, contradicting the distinctness of $C,D,E$.
\end{proof}

Corollary~\ref{C1C2} allows us to organize the large dense parts of $G$ into essentially disjoint clique-like pieces. Consider the intersection graph on $\mathcal C$, where two members are adjacent when they intersect. By Corollary~\ref{C1C2}, each component has size one or two. For a singleton component $\{C\}$, put $S_i=C$ and $C_i=C$. For a component $\{C,D\}$ of size two, the proof of Corollary~\ref{C1C2} shows that $G[C\cup D]$ is a near-clique $K\cup\{u\}$, where $K$ is one of the two maximal cliques $C,D$, and the other maximal clique is $N_K(u)\cup\{u\}$. We choose $C_i:=K\in\mathcal C$, put $u_i:=u$, and set $S_i=C_i\cup\{u_i\}$. Since the other maximal clique belongs to $\mathcal C$, we have $|N_{C_i}(u_i)|\ge4\Delta/5$, and so $u_i$ has at least $4\Delta/5$ neighbors in $C_i$. These sets are pairwise disjoint. Accordingly, we obtain a partition
$
V(G)=L\cup \bigcup_{i=1}^{\ell}S_i,
$
where
$
L=V(G)\setminus \bigcup_{i=1}^{\ell} S_i,
$
such that each $S_i$ is either
\begin{itemize}
\item a clique $C_i\in\mathcal{C}$, or
\item a set consisting of a clique $C_i\in\mathcal C$ together with a vertex $u_i\in V(G)\setminus C_i$ that has at least \(\frac45\Delta\) neighbors in $C_i$. In the latter case, we call $S_i$ a \emph{near-clique}.
\end{itemize}

We shall need the following results concerning this partition.

\begin{lemma}[Reed \cite{Reed1999}]\label{p-3}
If $v$ is a vertex in some $C_i\in \mathcal{C}$ with $|C_i|=\Delta-p$, then there is at most one neighbor of $v$ outside $C_i$ that has more than $p+3$ neighbors in $C_i$. Furthermore, if $v$ has degree $\Delta-1$, then there is no such neighbor.
\end{lemma}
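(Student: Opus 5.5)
Suppose, for a contradiction, that $v\in C_i$ has two distinct neighbours $x,y\notin C_i$, each with at least $p+4$ neighbours in $C_i$. I will extend a $(\Delta-1)$-colouring of a proper induced subgraph to all of $G$, reusing the two tricks from the proof of Lemma~\ref{Hcupv}: give two nonadjacent vertices the same colour, and leave a vertex with an uncoloured neighbour until last. Since $|C_i|=\Delta-p$ and $v$ has degree at most $\Delta$, the vertex $v$ has at most $p+1$ neighbours outside $C_i$, and $x,y$ are two of them. Let $G'=G-C_i$. This is a proper induced subgraph of $G$, so it has a $(\Delta-1)$-colouring $f$, as noted after Observation~\ref{obs:critical}. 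Each $w\in C_i$ has at most $p+1$ neighbours outside $C_i$, so at most $p+1$ colours are forbidden on $w$. Hence $w$ has a list $A(w)$ of at least $\Delta-p-2=|C_i|-2$ available colours, and $v$ has at most $p-1$ neighbours outside $C_i$ besides $x$ and $y$.

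The key step is to colour $C_i$ from these lists. It is a clique, so I need a system of distinct representatives. I will use Hall's theorem together with a recolouring step that exploits $x$ and $y$. Hall's condition can only fail when many vertices of $C_i$ have lists contained in a common set of size $|C_i|-2$ or $|C_i|-1$. Every vertex has at least $|C_i|-2$ colours, so an obstruction to Hall means essentially all vertices of $C_i$ see the same few outside colours. Now use $x$ and $y$. If $f(x)=f(y)$, then $v$ loses one fewer colour, so $|A(v)|\ge |C_i|-1$. Otherwise I try to recolour $x$ or $y$ in $G'$ to match the other. The vertex $x$ has at least $p+4$ neighbours in $C_i$, hence at most $\Delta-p-4$ neighbours in $G'$. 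It therefore has at least $p+3\ge 3$ free colours in $G'$, which gives room to change its colour, for instance to a colour already forbidden on many vertices of $C_i$. The goal is for $v$, together with the neighbours of $x$ or $y$ in $C_i$, to gain an extra available colour, so that a standard counting argument shows Hall's condition holds.

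Concretely, the plan is to order $C_i$ and colour greedily, leaving $v$ for last. Any vertex $w\in C_i$ with $|A(w)|\ge |C_i|$ never gets stuck. A greedy colouring of the clique succeeds if at most two colours are lost overall at the end. Since $x$ and $y$ each dominate at least $p+4$ vertices of $C_i$, at least $p+4$ vertices of $C_i$ share the forbidden colour $f(x)$. If I recolour $x$ to equal $f(y)$, or pick $f$ so that this holds, the union of the forbidden colours shrinks. Then at least one vertex of $C_i$ sees at most $p$ outside colours, which gives the slack needed for $v$.

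I expect the main obstacle to be guaranteeing that $x$ and $y$ can be made to share a colour in $G'$. This may fail if $xy\in E(G)$. In that case I would instead use a vertex $z\in C_i$ nonadjacent to $x$ and give $x$ and $z$ the same colour. Such a $z$ exists because $\omega(G)<\Delta$ and, by Lemma~\ref{p=1}, $x$ has at most four neighbours in any $(\Delta-1)$-clique. In general $x$ misses at least one vertex of $C_i$, since otherwise $C_i\cup\{x\}$ would be a larger clique, contradicting maximality. So $z$ exists. Colouring $x$ and $z$ alike frees a colour for $v$ and for the common neighbours. Doing this for both $x$ and $y$ with distinct $z$'s, as in the proof of Lemma~\ref{Hcupv}, gives two colours of slack at $v$. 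That lets me colour $v$ last, which yields a contradiction. The degree-$(\Delta-1)$ case is the same: $v$ then has one fewer outside neighbour, so a single such $x$ already provides enough slack.
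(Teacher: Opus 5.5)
The paper does not prove this lemma; it quotes it from Reed. So the question is whether your argument stands on its own, and as written it does not.

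\textbf{The main route fails.} You colour $G-C_i$, arrange $f(x)=f(y)$, and then list-colour $C_i$ from lists of size at least $|C_i|-2$ via Hall. This breaks in two places. First, $f(x)=f(y)$ cannot be arranged in general. It is impossible when $xy\in E(G)$. When $xy\notin E(G)$, the free colours of $x$ and of $y$ form two sets of size at least $p+3$ inside a palette of $\Delta-1>2(p+3)$ colours, so they need not meet. Second, even when $f(x)=f(y)$ holds, it only gives $|A(v)|\ge|C_i|-1$. Hall still fails if the $|C_i|-1$ vertices of $C_i\setminus\{v\}$ all carry the same list of size $|C_i|-2$. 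Extra room at $v$, or at ``at least one vertex'', is not what is needed. In a greedy colouring of $C_i$, the last vertex needs two units of slack (two distinct repeated colours in its neighbourhood, or one repeated colour plus degree $\Delta-1$), and the penultimate vertex needs one. Nothing in your Hall paragraph produces these.

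\textbf{The fallback is the right idea but its key step is missing.} Your last paragraph gives $x$ the colour of a non-neighbour $z_x\in C_i$, and likewise $y$ and some $z_y$. You never show that such a shared colour exists, and that is exactly where the hypothesis ``more than $p+3$'' is needed. Here is how to complete it.
\begin{itemize}
\item Colour $G-C_i-\{x,y\}$ by minimality.
\item The colours forbidden for the pair $\{x,z_x\}$ are those on the at most $\Delta-(p+4)$ neighbours of $x$ outside $C_i$ and the at most $p+1$ neighbours of $z_x$ outside $C_i$. That is at most $\Delta-3$ colours, so a common colour $\alpha$ exists.
\item For $\{y,z_y\}$ you must also avoid $\alpha$, which sits on $z_x$. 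That gives at most $\Delta-2$ forbidden colours, so a common colour $\beta\ne\alpha$ exists.
\item You need $z_x\ne z_y$. If $x$ and $y$ miss exactly the same single vertex $z$ of $C_i$, split on $xy$. When $xy\notin E(G)$, give $x,y,z$ one common colour. When $xy\in E(G)$, the set $(C_i\setminus\{z\})\cup\{x,y\}$ is a clique larger than $C_i$; this must be excluded using how $C_i$ is selected in the partition.
\item Reserve not only $v$ but also a second vertex $u\in N_{C_i}(x)\setminus\{v,z_y\}$, which exists since $|N_{C_i}(x)|\ge p+4$. Colour the rest of $C_i$ greedily. Then colour $u$: it has $v$ uncoloured and sees $\alpha$ on both $x$ and $z_x$. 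Finally colour $v$: it sees $\alpha$ twice and $\beta$ twice.
\item When $d(v)=\Delta-1$, the single pair $\{x,z_x\}$ suffices.
\end{itemize}
With these pieces the proof goes through. Without them, the proposal is a plan rather than a proof.
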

This partition is the structural framework for the coloring argument in the rest of the paper. The next lemma, due to Reed, explains why this partition is useful. It gives a deterministic criterion under which a partial $(\Delta-1)$-coloring can be extended to the whole graph. Later, in Section~\ref{proper-coloring}, we will use probabilistic tools to construct a partial coloring satisfying exactly these conditions.

The following deterministic extension lemma is the form of Reed's extension step needed here.

\begin{lemma}\label{coloring}
Any partial proper $(\Delta-1)$-coloring of $G$ satisfying the following three conditions can be extended to a $(\Delta-1)$-coloring of $G$.
\begin{enumerate}
\item[$(i)$] for every vertex $v\in L$ there are at least two distinct colors, each appearing on at least two colored vertices in the neighborhood of $v$,
\item[$(ii)$] for each near-clique $S_{i}=C_i\cup\{u_i\}$, there are two uncolored neighbors of $u_{i}$ in $C_{i}$, and
\item[$(iii)$] for every $C_{i}$, there are two uncolored vertices $w_{i}$ and $x_{i}$ of $C_{i}$ whose neighborhoods each contain two distinct colors, each appearing on at least two colored vertices in the current partial coloring.
\end{enumerate}
\end{lemma}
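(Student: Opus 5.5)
We extend the partial colouring greedily, in an order chosen so that every vertex, when its turn comes, still has a free colour among the $\Delta-1$ colours. The basic counting fact is this. Suppose a vertex $z$ has degree at most $\Delta$, and its neighbourhood contains two distinct colours that each appear on at least two coloured vertices. Then the coloured neighbours of $z$ use at most $\deg(z)-2$ distinct colours. If in addition $z$ has at least one uncoloured neighbour that is coloured after $z$, the neighbours coloured before $z$ use at most $\Delta-3$ colours in the bad case. In general, each repeated colour saves one slot, and each later-coloured neighbour saves one slot. So $z$ has a free colour whenever
\[
(\text{number of repeated-colour savings}) + (\text{number of neighbours coloured after } z) \ge 1 + (\deg(z)-(\Delta-1)),
\]
that is, whenever this quantity is at least $2$ when $\deg(z)=\Delta$. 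Condition $(i)$ already gives two savings, so this holds for every uncoloured vertex of $L$ regardless of order. Such vertices can therefore be coloured at any time, and we colour the uncoloured vertices of $L$ first.

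Next we treat each $S_i$ separately. We postpone $w_i$, $x_i$ from $(iii)$ to the very end, and for a near-clique we also postpone the two uncoloured neighbours $y_i,y_i'$ of $u_i$ in $C_i$ from $(ii)$. The vertices of $S_i$ are coloured in this order.
\begin{enumerate}
\item[(a)] We first colour $u_i$, if it is uncoloured. It has at least two uncoloured neighbours $y_i,y_i'$ that come later, so it has a free colour.
\item[(b)] We then colour the remaining uncoloured vertices of $C_i\setminus\{w_i,x_i,y_i,y_i'\}$ in any order. Each of them is adjacent to all of $C_i$, hence to the still uncoloured $w_i,x_i$. These give two later neighbours, so a colour is free.
\item[(c)] Next we colour $y_i,y_i'$, where these are not among $w_i,x_i$. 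Each is adjacent to $w_i$ and $x_i$, which remain uncoloured.
\item[(d)] Finally we colour $w_i$ and then $x_i$. Condition $(iii)$ gives $x_i$ two repeated colours in its neighbourhood. These colours were present in the original partial colouring, and extending a colouring never destroys a repetition. So $x_i$ has at least two colours saved and can be coloured last. The vertex $w_i$ has the same two savings, plus the later neighbour $x_i$.
\end{enumerate}
Here we must be careful about the roles of $w_i$, $x_i$ in case they coincide with $y_i$ or $y_i'$. If they do, simply merging the roles still leaves every vertex before the last two with at least two later neighbours among $\{w_i,x_i\}$. Moreover, $u_i$ still has two later neighbours among the postponed vertices, since they are neighbours of $u_i$.

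The order across different pieces is irrelevant, because each vertex's count uses only later vertices inside its own piece, or repetitions. Vertices of $L$ rely only on repetitions, which persist.

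The one delicate point is step (d), or more precisely the claim that repetitions persist and are genuinely in the neighbourhood. This is immediate, since we never recolour. So we expect the main care to be the bookkeeping for the near-clique vertex $u_i$ when it is already coloured, or when it lies adjacent to $w_i$ or $x_i$. None of these cases causes an obstruction, because degrees are at most $\Delta$ and only $\Delta-1$ colours need to be avoided.
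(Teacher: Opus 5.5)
Your proof is correct and follows essentially the same argument as the paper: a greedy extension based on the fact that a vertex with two later-coloured neighbours, or with two persistent repeated colours, always has a free colour, with $u_i$ coloured before the rest of $C_i$ and $w_i,x_i$ reserved for last. The differences are cosmetic: you colour $L$ first where the paper colours it last, and your separate postponement of $y_i,y_i'$ is harmless but unnecessary, since $u_i$ is already coloured before any vertex of $C_i$.
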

\begin{proof}
We use the following elementary fact. If an uncolored vertex $v$ has two uncolored neighbors, or if the colored part of $N(v)$ contains two distinct repeated colors, then $v$ has an available color from $[\Delta-1]$. Indeed, in the first case at most $d(v)-2\le\Delta-2$ colors appear on the colored neighbors of $v$. In the second case, even if all neighbors of $v$ are colored, the presence of two repeated colors means that at most $d(v)-2\le\Delta-2$ distinct colors appear on $N(v)$.

We now extend the coloring. First consider each near-clique $S_i=C_i\cup\{u_i\}$. If $u_i$ is uncolored, condition $(ii)$ gives two uncolored neighbors of $u_i$ in $C_i$, so the elementary fact allows us to color $u_i$.

Next, for each block $S_i$, reserve the two vertices $w_i,x_i\in C_i$ supplied by condition $(iii)$ and greedily color every other uncolored vertex of $C_i$. Whenever such a vertex is colored, the two reserved vertices are still uncolored and adjacent to it, because $C_i$ is a clique; hence it has an available color. After that, color $w_i$ and $x_i$. The two distinct repeated colors in their neighborhoods, guaranteed by condition $(iii)$, involve vertices that were already colored in the original partial coloring and have not been recolored, so these repeated colors are still present. Thus both reserved vertices have available colors.

Finally, greedily color the uncolored vertices of $L$ in an arbitrary order. For every such vertex $v$, the two distinct repeated colors guaranteed by condition $(i)$ are still present, since the vertices carrying them were colored before the extension procedure began and their colors have not changed. Hence each vertex of $L$ has an available color when it is considered. This yields a full proper $(\Delta-1)$-coloring of $G$.
\end{proof}

For the sake of the probabilistic argument in Section~\ref{proper-coloring}, let $A_v$ be the event that there are fewer than two distinct repeated colors in the neighborhood of $v$. For each near-clique $S_i$, let $E_i$ be the event that condition~$(ii)$ fails on $S_i$. For each $S_i$, let $F_i$ be the event that condition~$(iii)$ fails on $C_i$. We note that if none of the events in the set
\[
\mathcal{E}=\left(\bigcup A_v\right)\cup\left(\bigcup E_i\right)\cup\left(\bigcup F_i\right)
\]
occurs, then the random coloring satisfies conditions~$(i)$, $(ii)$, and~$(iii)$ of Lemma~\ref{coloring}.

\section{\texorpdfstring{A proper coloring of $G$}{A proper coloring of G}}\label{proper-coloring}
In this section, we complete the proof of Theorem~\ref{main} by establishing the probabilistic step outlined earlier. More precisely, we show that with positive probability the random partial coloring avoids all bad events introduced in Section~\ref{decomposition}, and therefore satisfies the hypotheses of Lemma~\ref{coloring}. We begin by recalling the Lov\'asz Local Lemma and McDiarmid's bounded-differences inequality, which will be used in the three probability estimates below.

\begin{theorem}[Lov\'asz Local Lemma {\cite{lll}}]\label{l}
Let $\mathcal{X}$ be a finite set of events such that each $X\in\mathcal{X}$ satisfies:
\begin{itemize}
    \item[$(\romannumeral1)$] $\mathbb{P}(X)\le p$, and
    \item[$(\romannumeral2)$] $X$ is mutually independent of all but at most $d$ other events in $\mathcal{X}$.
\end{itemize}
If $ep(d+1)\le 1$, then with positive probability none of the events in $\mathcal{X}$ occurs.
\end{theorem}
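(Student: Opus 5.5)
We would prove the symmetric form stated in Theorem~\ref{l} through the standard conditional-probability induction. If $d=0$, all events are mutually independent, and $\mathbb{P}(X)\le p\le 1/e<1$ gives $\mathbb{P}\bigl(\bigcap_{X}\overline{X}\bigr)=\prod_X(1-\mathbb{P}(X))>0$ directly. So assume $d\ge 1$. For each $X\in\mathcal{X}$, fix a set $\Gamma(X)\subseteq\mathcal{X}\setminus\{X\}$ with $|\Gamma(X)|\le d$ such that $X$ is mutually independent of the family $\mathcal{X}\setminus(\Gamma(X)\cup\{X\})$. The central claim we would establish is the following: for every $X\in\mathcal{X}$ and every $\mathcal{S}\subseteq\mathcal{X}\setminus\{X\}$ with $\mathbb{P}\bigl(\bigcap_{Y\in\mathcal S}\overline Y\bigr)>0$,
\[
\mathbb{P}\Bigl(X\,\Big|\,\bigcap_{Y\in\mathcal S}\overline Y\Bigr)\le \frac{1}{d+1}.
\]

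Granting the claim, the conclusion follows by the chain rule. Enumerate $\mathcal{X}=\{X_1,\dots,X_n\}$. Inductively, each intersection $\bigcap_{j<i}\overline{X_j}$ has positive probability, because it is the previous intersection intersected with an event of conditional probability at least $1-1/(d+1)>0$. Therefore
\[
\mathbb{P}\Bigl(\bigcap_{i=1}^n\overline{X_i}\Bigr)=\prod_{i=1}^n\Bigl(1-\mathbb{P}\bigl(X_i\mid \textstyle\bigcap_{j<i}\overline{X_j}\bigr)\Bigr)\ge\Bigl(1-\frac1{d+1}\Bigr)^n>0.
\]
The positivity of each conditioning event is in fact proved simultaneously with the claim by this same argument, so there is no circularity.

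We would prove the claim by induction on $|\mathcal S|$. The case $\mathcal S=\emptyset$ is immediate, since $p\le 1/(e(d+1))\le 1/(d+1)$. For general $\mathcal S$, split it as $\mathcal S_1=\mathcal S\cap\Gamma(X)$ and $\mathcal S_2=\mathcal S\setminus\Gamma(X)$. If $\mathcal S_1=\emptyset$, mutual independence gives conditional probability $\mathbb{P}(X)\le p$, and we are done. Otherwise, write $B_1=\bigcap_{Y\in\mathcal S_1}\overline Y$ and $B_2=\bigcap_{Y\in\mathcal S_2}\overline Y$, and use
\[
\mathbb{P}(X\mid B_1\cap B_2)=\frac{\mathbb{P}(X\cap B_1\mid B_2)}{\mathbb{P}(B_1\mid B_2)}.
\]
The numerator is at most $\mathbb{P}(X\mid B_2)=\mathbb{P}(X)\le p$, again by mutual independence of $X$ from $\mathcal S_2$. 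For the denominator, list $\mathcal S_1=\{Y_1,\dots,Y_r\}$ with $r\le d$ and expand by the chain rule. Each factor has the form $1-\mathbb{P}\bigl(Y_k\mid \bigcap_{j<k}\overline{Y_j}\cap B_2\bigr)$, where the conditioning family has size $|\mathcal S|-r+k-1<|\mathcal S|$ and does not contain $Y_k$. The induction hypothesis therefore applies and gives
\[
\mathbb{P}(B_1\mid B_2)\ge\Bigl(1-\frac1{d+1}\Bigr)^d\ge \frac1e.
\]
Combining the two bounds, $\mathbb{P}(X\mid B_1\cap B_2)\le ep\le 1/(d+1)$, which is exactly where the hypothesis $ep(d+1)\le1$ is used.

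The main obstacle is the bookkeeping in the denominator step. We must check that every conditioning event arising in the chain-rule expansion has positive probability and uses a strictly smaller family that excludes the event being conditioned. We must also verify the elementary inequality $(1-\tfrac1{d+1})^d\ge 1/e$, which is equivalent to $d\log(1+1/d)\le 1$. To handle these cleanly, we would carry the positivity assertion inside the induction hypothesis together with the probability bound.
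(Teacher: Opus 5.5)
The paper does not prove this statement. It quotes the symmetric Local Lemma from the literature (Erd\H{o}s--Lov\'asz) and uses it only as a black box in the final step of the proof of Theorem~\ref{main}, so there is no paper argument to compare against.

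Your proposal is the standard textbook proof (Spencer; Alon--Spencer), and it is correct. In particular, the key claim $\mathbb{P}\bigl(X\mid\bigcap_{Y\in\mathcal S}\overline Y\bigr)\le 1/(d+1)$ holds, the split $\mathcal S=\mathcal S_1\cup\mathcal S_2$ with the quotient bounded by $ep$ via $(1-\frac1{d+1})^d\ge 1/e$ is valid, and the final chain rule is valid. You were also right to treat $d=0$ separately, since there the chain-rule bound $(1-\frac1{d+1})^n$ degenerates to $0$.

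One simplification is available. Inside the claim, positivity of every conditioning event is automatic. Each of $B_2$ and $\bigcap_{j<k}\overline{Y_j}\cap B_2$ contains $\bigcap_{Y\in\mathcal S}\overline Y$, which has positive probability by hypothesis. So positivity only needs to be tracked in the final product over $X_1,\dots,X_n$, which you already do.
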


We now estimate the probabilities of the three types of bad events corresponding to conditions $(i)$-$(iii)$ of Lemma~\ref{coloring}. We treat these events separately.

We expose a uniformly random coloring
\[
  \varphi:V(G)\to [q],\qquad q:=\Delta-1,
\]
and then uncolor every vertex that has the same color as one of its neighbors. This is the conflict-deletion step.

\subsection{Preliminary probabilistic tools}

\begin{lemma}\label{lem:supplied-triples}
For any $C_i$, if $|C_i|=\Delta-p$, then we can find at least $(\Delta-2)/11$ pairwise disjoint triples, each consisting of a vertex $v\in C_i$ and two neighbors of $v$ outside $C_i$, both of which have at most $p+3$ neighbors in $C_i$.
\end{lemma}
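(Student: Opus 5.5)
Fix $C_i$ with $|C_i|=\Delta-p$. I will count, for each $v\in C_i$, its ``good'' outside neighbours (neighbours outside $C_i$ with at most $p+3$ neighbours in $C_i$), and then pick disjoint triples greedily.

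\textbf{Counting good neighbours.} By Observation~\ref{obs:critical}, every $v\in C_i$ has degree at least $\Delta-1$. It has exactly $\Delta-p-1$ neighbours inside $C_i$, so it has at least $p$ neighbours outside $C_i$. By Lemma~\ref{p-3}, at most one of these has more than $p+3$ neighbours in $C_i$, and none does when $d(v)=\Delta-1$.
\begin{itemize}
\item If $d(v)=\Delta$, then $v$ has $p+1$ outside neighbours, so it has at least $p$ good ones.
\item If $d(v)=\Delta-1$, then it has $p$ outside neighbours, all good.
\end{itemize}
So every $v\in C_i$ has at least $p$ good outside neighbours. When $p\ge 2$, every vertex of $C_i$ can therefore start a triple.

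When $p=1$ (so $|C_i|=\Delta-1$), I will use Lemma~\ref{p=1} instead. At most four vertices of $C_i$ have degree $\Delta-1$, so at least $\Delta-5$ vertices have degree $\Delta$ and hence exactly two outside neighbours. Each outside vertex has at most $4\le p+3$ neighbours in $C_i$, so both of these are good. The case $p\le 0$ cannot occur, since $\omega(G)\le\Delta-1$.

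\textbf{Greedy selection.} Restrict to the set $U\subseteq C_i$ of vertices with at least two good outside neighbours; by the above, $|U|\ge \Delta-5$. Choose a vertex $v\in U$ and two of its good outside neighbours $y,z$, and record the triple $(v,y,z)$. Then delete from consideration $v$ together with every vertex of $C_i$ adjacent to $y$ or $z$. Since $y$ and $z$ are good, this removes at most $1+2(p+3)$ vertices of $U$.

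The delicate point is that this bound is too weak for large $p$. So I will not delete a vertex of $C_i$ merely for being adjacent to a used outside vertex. Instead, I delete only $v$, and when I later choose a triple for some $v'$, I pick its two good outside neighbours among those not already used. The triples then need only be pairwise disjoint as vertex sets. The intended reading is that they are disjoint as sets of vertices; neighbourhoods may overlap.

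\textbf{Bounding the blocking.} A good outside vertex $y$ is adjacent to at most $p+3$ vertices of $C_i$. Hence each used outside vertex "blocks'' at most $p+3$ centre candidates by consuming one of their good neighbours. A candidate $v'$ fails only if at least $p-1$ of its at least $p$ good neighbours are used (or, when $p=1$, at least one of its two is used).

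Suppose $t$ triples have been chosen, so $2t$ outside vertices are used. Count pairs $(v',y)$ with $v'\in C_i$ unused and $y$ a used outside vertex adjacent to $v'$. This count is at most $2t(p+3)$. Each blocked vertex contributes at least $p-1$ such pairs (at least $1$ when $p\le 2$). So the number of blocked vertices is at most $2t(p+3)/\max(p-1,1)$, which is at most $10t$ for every $p\ge1$. The worst case is $p=1,2$, where the bound is $2t\cdot 5=10t$.

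Together with the $t$ used centres, at most $11t$ vertices of $U$ are unavailable. Thus the process continues while $11t<\Delta-5$, roughly, which yields about $(\Delta-2)/11$ triples; the constant $11$ matches this count.

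\textbf{Main obstacle.} The hard step is the bookkeeping of this counting bound. It must handle the $p=1,2$ cases, where each candidate has so few good neighbours that it is blocked as soon as one of them is used, and it must recover exactly the constant $(\Delta-2)/11$. I expect to adjust the size of $U$ (for example $\Delta-2$ versus $\Delta-5$ in the various small-$p$ cases) so that the final count is precisely $(\Delta-2)/11$.
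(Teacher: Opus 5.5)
Your strategy is the paper's. Every vertex of $C_i$ has at least $p$ good outside neighbours, by Observation~\ref{obs:critical} and Lemma~\ref{p-3}. You build a maximal family of disjoint triples and double count edges between the $2t$ used outside vertices and the non-centre vertices of $C_i$. Each used outside vertex has at most $p+3$ neighbours in $C_i$, and each blocked vertex sends at least $p-1$ edges to them. Lemma~\ref{p=1} handles $p=1$.

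\textbf{The gap is the final accounting, which as written does not reach $(\Delta-2)/11$.}
\begin{itemize}
\item The claim $|U|\ge\Delta-5$ is false once $p\ge 6$. For $p\ge 2$ you have $U=C_i$, so $|U|=\Delta-p$, which can be as small as about $4\Delta/5$.
\item Collapsing the blocking bound $2t(p+3)/(p-1)$ to the uniform $10t$ discards exactly what is needed. With $|U|=\Delta-p$ it yields only $t\ge(\Delta-p)/11$. That is below $(\Delta-2)/11$ for every $p\ge 3$, and only about $4\Delta/55$ when $p\approx\Delta/5$.
\item For $p=1$ the value $10t$ is miscomputed, since $p+3=4$. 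Also $11t\ge\Delta-5$ gives only $(\Delta-5)/11$.
\end{itemize}
Adjusting $|U|$, as you propose, cannot repair this. For large $p$ the gain has to come from the factor $(p+3)/(p-1)$ shrinking as $p$ grows.

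\textbf{The fix is already in your intermediate bound; just keep the $p$-dependence.} At termination every vertex of $U$ is a centre or blocked. So for $p\ge 2$,
\[
\Delta-p\le t+\frac{2t(p+3)}{p-1},\qquad\text{i.e.}\qquad t\ge\frac{(p-1)(\Delta-p)}{3p+5}.
\]
Moreover,
\[
11(p-1)(\Delta-p)-(3p+5)(\Delta-2)=8(p-2)\Delta-11p^2+17p+10 .
\]
This vanishes at $p=2$, so the constant $11$ is sharp exactly there. It is positive for $3\le p\le\Delta/5-1$: using $\Delta\ge 5(p+1)$ it is at least $29p^2-23p-70>0$.

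For $p=1$, each used outside vertex has at most four neighbours in $C_i$, and one of them is its own (used) centre. Hence at most $6t$ candidates are blocked, so $7t\ge\Delta-5$. This gives $t\ge(\Delta-5)/7\ge(\Delta-2)/11$ for $\Delta\ge 11$. Your first deletion idea, which removes $1+2(p+3)$ vertices per triple, already does this when $p=1$.

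With these corrections your argument coincides with the paper's computation.
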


\begin{proof}
Take a maximal family of pairwise disjoint triples of the desired form, and let its size be $k$. Let $T$ be the set of vertices of $C_i$ contained in these triples, and let $S$ be the set of vertices outside $C_i$ contained in these triples. Then $|T|=k$ and $|S|=2k$.
\begin{claim}
Every vertex of $C_i\setminus T$ has at least $p-1$ neighbors in $S$.
\end{claim}
Let $v\in C_i\setminus T$. Since $|C_i|=\Delta-p$ and $\delta(G)\ge \Delta-1$, the vertex $v$ has at least $p$ neighbors outside $C_i$. We claim that at least $p$ of these outside neighbors are good, where a good neighbor means one with at most $p+3$ neighbors in $C_i$. If $d(v)=\Delta-1$, then Lemma~\ref{p-3} says that none of the outside neighbors is bad, and the claim follows. If $d(v)=\Delta$, then $v$ has at least $p+1$ outside neighbors, while Lemma~\ref{p-3} says that at most one of them is bad; again at least $p$ outside neighbors are good. By the maximality of our family of triples, at most one good neighbor of $v$ lies outside $S$, since otherwise we could choose two good neighbors of $v$ outside $S$ and add a new triple through $v$, contrary to maximality. Hence at least $p-1$ good neighbors of $v$ lie in $S$. That is, $v$ has at least $p-1$ neighbors in $S$.  This implies that the number of edges between $S$ and $C_i\setminus T$ is at least
\begin{align*}
    (p-1)(|C_i|-k)=(p-1)(\Delta-p-k).
\end{align*}
On the other hand, each vertex of $S$ has at most $p+3$ neighbors in $C_i$, and hence at most $p+3$ neighbors in $C_i\setminus T$. Since $|S|=2k$, there are at most $2k(p+3)$ edges between $S$ and $C_i\setminus T$. Therefore $(p-1)(\Delta-p-k)\le 2k(p+3)$, and so
		\begin{align*}
		    k\ge \frac{(p-1)(\Delta-p)}{3p+5}.
		\end{align*}
If $p\ge 2$, then this implies $k\ge (\Delta-2)/11$, as required. Indeed, after clearing denominators the desired inequality is equivalent to
\[
8(p-2)\Delta-11p^2+17p+10\ge0,
\]
which is equality for $p=2$ and is positive for $p\ge3$ in the range $p\le\Delta/5-1$.

 If $p=1$, then by Lemma~\ref{p=1}, at most four vertices of $C_i$ have degree $\Delta-1$. Hence at least $\Delta-5$ vertices of $C_i$ have degree $\Delta$. For each such vertex $v$, since $|C_i|=\Delta-1$, the vertex $v$ has exactly two neighbors outside $C_i$, and both of them are good by Lemma~\ref{p=1}. Thus these vertices of $C_i$ are all candidates for the desired triples.

Now choose pairwise disjoint triples greedily. Each chosen triple $\{v,x,y\}$ blocks at most seven candidate vertices: namely, $v$ itself, together with at most three further candidates adjacent to $x$ and at most three further candidates adjacent to $y$, since each of $x$ and $y$ has at most four neighbors in $C_i$. Therefore, if the maximal family has size $k$, then $7k\ge \Delta-5$. Hence $k\ge (\Delta-5)/7\ge(\Delta-2)/11$ for $\Delta\ge 11$.
This completes the proof of Lemma~\ref{lem:supplied-triples}.
\end{proof}

\begin{lemma}\label{lem:many-nonedges}
For every vertex $v\in L$, the neighborhood $N(v)$ contains at least $
   \frac{\Delta^2}{50}-\Delta$
unordered pairs of nonadjacent vertices.
\end{lemma}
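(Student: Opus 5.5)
Suppose, for contradiction, that $N(v)$ contains fewer than $\Delta^2/50-\Delta$ nonadjacent pairs. I plan to extract from $N(v)$ a dense induced subgraph $H$ satisfying the hypotheses of Lemma~\ref{Hcupv}. That lemma then forces $H$ to be a clique or a near-clique, which will put $v$ inside some large clique of $\mathcal{C}$ (or make it the extra vertex $u_i$ of some $S_i$). This contradicts $v\in L$.

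\textbf{Step 1: extract a dense set.} Write $N=N(v)$, so $|N|\ge \Delta-1$ by Observation~\ref{obs:critical}. A vertex $x\in N$ is \emph{bad} if it has at least $\Delta/10$ non-neighbours in $N$. Each nonadjacent pair is counted twice in the total non-degree, so
\[
|\{\text{bad}\}|\cdot \Delta/10 \le 2\cdot(\Delta^2/50-\Delta),
\]
giving fewer than $\Delta/2.5$ bad vertices. That alone is too weak, so I will iterate: repeatedly delete any vertex of $N$ with fewer than $4\Delta/5-1$ neighbours in the current set. Each deletion removes at least $|{\rm current}|-4\Delta/5$ non-edges within the set. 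While the set has size at least $0.9\Delta$, this is at least about $\Delta/10$ non-edges per deletion. Hence at most roughly $2\cdot 50^{-1}\Delta^2/(\Delta/10)=\Delta/5\cdot$(small) deletions can occur, and the process stops with a set $N'\subseteq N$ of size at least about $0.8\Delta$ or more in which every vertex has at least $4\Delta/5-1$ neighbours. The constant $1/50$ is presumably chosen exactly so that the number of deletions is at most about $\Delta/10$, leaving $|N'|\ge 0.9\Delta-1$.

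\textbf{Step 2: apply Lemma~\ref{Hcupv}.} Set $H=G[N'\cup\{v\}]$. Since $v$ is complete to $N'$, every vertex of $H$ has at least $4\Delta/5$ neighbours in $H$, and $|H|\le \Delta+1$. Applying Lemma~\ref{Hcupv} with $c=6$, $H$ is a clique, or a clique $C_H$ plus one vertex.
\begin{itemize}
\item If $H$ is a clique, it has at least $0.9\Delta\ge 4\Delta/5+1$ vertices, so it lies in a maximal clique of $\mathcal{C}$ containing $v$.
\item If $H$ is a near-clique, then either $v\in C_H$, and we finish as in the clique case, or $v=v_H$. In the latter case $v$ is adjacent to all of $C_H$, so $C_H\cup\{v\}$ is a clique, a contradiction.
\end{itemize}
So $v$ lies in a member $C\in\mathcal{C}$.

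\textbf{Step 3: check that $v\notin L$.} By the construction of the partition, every member of $\mathcal{C}$ is contained in some $S_i$: it is either $C_i$ itself, or equals $N_{C_i}(u_i)\cup\{u_i\}\subseteq S_i$. Hence $v\in S_i$, contradicting $v\in L$.

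\textbf{Main obstacle.} The hard part is the bookkeeping in Step 1. I need to show that the peeling process stops before losing more than about $\Delta/10$ vertices, using only the bound of $\Delta^2/50-\Delta$ non-edges. The $-\Delta$ term likely absorbs the slack from $|N|=\Delta-1$ versus $\Delta$ and from rounding in $4\Delta/5$.

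If the deletion count does not close, the fallback is to delete all vertices with at least $\Delta/10$ non-neighbours in one shot. That leaves a set of size at least $\Delta-1-\Delta/2.5$ in which each vertex misses at most $\Delta/10$ others of the full set. I would then check whether a refined count (each removed vertex carrying about $\Delta/10$ non-edges of its own) suffices to keep the minimum degree at least $4\Delta/5$.
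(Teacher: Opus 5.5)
Your architecture is sound and is essentially the contrapositive of the paper's argument: peel $N(v)$, apply Lemma~\ref{Hcupv} to the residue together with $v$, and conclude that $v$ lies in a member of $\mathcal C$ and hence in some $S_i$. Steps~2 and~3 are fine. The gap is in Step~1, which carries all the quantitative content of the lemma, and the bound you aim for there is false. The non-edge budget alone does not stop the peeling after about $\Delta/10$ deletions. Take $N$ to be a clique $A$ of size $0.85\Delta$ plus an independent set $B$ of size $0.15\Delta$, where each vertex of $B$ misses slightly more than $0.05\Delta$ vertices of $A$. This has only about $0.019\Delta^2<\Delta^2/50-\Delta$ non-edges, yet all $0.15\Delta$ vertices of $B$ get peeled. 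The crude bound of $\Delta/10$ non-edges per deletion therefore cannot close the count. (The factor $2$ does not belong either, since each non-edge is charged only to its first deleted endpoint.) Your fallback fails too: the one-shot count allows up to $0.4\Delta$ deletions. Once more than $\Delta/10$ vertices are gone, a survivor with just under $\Delta/10$ non-neighbours already has degree below $4\Delta/5$.

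What you need is the exact, decreasing per-step count. At step $i$ the current set has at least $\Delta-1-i$ vertices, so the deleted vertex has at least $\Delta/5-1-i$ non-neighbours there, and none of these is counted again. Thus $k$ deletions destroy at least $\sum_{i<k}(\Delta/5-1-i)=\frac12a^2-\frac12(a-k)^2$ non-edges, where $a=\Delta/5-\frac12$. This is at least $\Delta^2/50-\Delta$ once $(a-k)^2\le 1.8\Delta+\frac14$. So the peeling stops with $|N'|\ge 4\Delta/5+\sqrt{1.8\Delta}-\frac12$. That is nowhere near $0.9\Delta$, because the constant $1/50\approx\frac12(1/5)^2$ is calibrated to peeling all the way down to about $4\Delta/5$. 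It is still above $4\Delta/5+1$, which is all Step~2 uses. The paper avoids even this margin computation by running the selection for exactly $\lfloor\Delta/5\rfloor$ steps. At each step a vertex with fewer than $4\Delta/5$ neighbours in the remaining set $R_i$ exists: trivially when $|R_i|\le 4\Delta/5$, and otherwise because Lemma~\ref{Hcupv} applied to $G[R_i]$ would place $v$ in a member of $\mathcal C$. Summing the same arithmetic series then gives $\Delta^2/50-3\Delta/10$ directly.
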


\begin{proof}
Suppose $v\in L$. We iteratively choose vertices
\[
  v_0,v_1,\ldots,v_{M-1}\in N(v),\qquad M:=\left\lfloor\frac{\Delta}{5}\right\rfloor,
\]
such that, for each $i$, the vertex $v_i$ has fewer than $4\Delta/5$ neighbors in
\[
  R_i:=N(v)\setminus\{v_0,\ldots,v_{i-1}\}.
\]
Indeed, if no such vertex existed and $|R_i|\ge4\Delta/5+1$, then $G[R_i]$ would satisfy the hypothesis of Lemma~\ref{Hcupv}; hence $G[R_i]$ would be a clique or a near-clique. If it were a clique, then $R_i\cup\{v\}$ would contain a clique of size at least $4\Delta/5+2$ containing $v$. If it were a near-clique, its clique part would have size at least $|R_i|-1\ge4\Delta/5$, and adjoining $v$ to that clique would give a clique of size at least $4\Delta/5+1$ containing $v$. In either case $v$ lies in a member of $\mathcal C$, and therefore in some block $S_j$, contrary to $v\in L$. If $|R_i|\le4\Delta/5$, then every vertex of $R_i$ has fewer than $4\Delta/5$ neighbors in $R_i$, so the choice is immediate. The construction is possible for all $i<M$ because $d(v)\ge\Delta-1$ by Observation~\ref{obs:critical}.

For $0\le i<M$, the set $R_i$ has size at least $\Delta-1-i$. Since $v_i$ has fewer than $4\Delta/5$ neighbors in $R_i$, it has more than
\[
  |R_i|-1-\frac{4\Delta}{5}\ge \frac{\Delta}{5}-i-2
\]
non-neighbors in $R_i$. Counting only the nonedges first exposed at step $i$, we obtain
\[
\begin{aligned}
  |
\{\{x,y\}\subseteq N(v):xy\notin E(G)\}|
  &\ge \sum_{i=0}^{M-1}\left(\frac{\Delta}{5}-i-2\right)  \\
  &= M\left(\frac{\Delta}{5}-2\right)-\frac{M(M-1)}2.
\end{aligned}
\]
The function $x\mapsto x(\Delta/5-2)-x(x-1)/2$ is decreasing on the interval $[\Delta/5-1,\Delta/5]$ when $\Delta\ge10$. Since $\Delta/5-1\le M\le\Delta/5$, the last expression is at least
\[
  \frac{\Delta}{5}\left(\frac{\Delta}{5}-2\right)-\frac{(\Delta/5)(\Delta/5-1)}2
  =\frac{\Delta^2}{50}-\frac{3\Delta}{10}
  \ge \frac{\Delta^2}{50}-\Delta.
\]
This proves the lemma.
\end{proof}

\begin{lemma}[McDiarmid's inequality \cite{McDiarmid1989}]\label{lem:mcdiarmid}
Let $X=f(Z_1,\ldots,Z_N)$, where $Z_1,\ldots,Z_N$ are independent random variables. Suppose changing the $j$th coordinate can change $X$ by at most $c_j$. Then, for every $s>0$,
\[
  \mathbb P(X\le \mathbb{E} X-s)\le \exp\left(-\frac{2s^2}{\sum_j c_j^2}\right),
\]
and similarly
\[
  \mathbb P(X\ge \mathbb{E} X+s)\le \exp\left(-\frac{2s^2}{\sum_j c_j^2}\right).
\]
\end{lemma}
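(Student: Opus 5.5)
This is McDiarmid's bounded-differences inequality, a classical result that the paper cites rather than proves. The natural proof is the Azuma--Hoeffding martingale method.

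\emph{Step 1: the Doob martingale.} Set
\[
X_j=\mathbb{E}[X\mid Z_1,\ldots,Z_j],\qquad 0\le j\le N,
\]
so that $X_0=\mathbb{E}X$ and $X_N=X$. Write $D_j=X_j-X_{j-1}$, which gives $X-\mathbb{E}X=\sum_j D_j$ and $\mathbb{E}[D_j\mid Z_1,\ldots,Z_{j-1}]=0$.

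\emph{Step 2: each increment lies in a short interval.} This step uses independence. Fix $Z_1,\ldots,Z_{j-1}$. Because $Z_{j+1},\ldots,Z_N$ are independent of $Z_j$, we have $X_j=g(Z_1,\ldots,Z_j)$, where $g$ is obtained by integrating $f$ over the remaining coordinates against their product law. Changing the $j$th argument of $f$ moves $f$ by at most $c_j$, so changing $Z_j$ moves $g$ by at most $c_j$. Hence $D_j$ lies in an interval $[a_j,a_j+c_j]$, where $a_j$ depends only on $Z_1,\ldots,Z_{j-1}$. Concretely, take $a_j=\inf_z g(\ldots,z)-X_{j-1}$.

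\emph{Step 3: Hoeffding's lemma.} A mean-zero random variable supported on an interval of length $c$ satisfies $\mathbb{E}e^{\lambda D}\le e^{\lambda^2c^2/8}$. I would prove this by convexity: bound $e^{\lambda x}$ by the chord through the endpoints of the interval. This reduces the claim to showing that a certain function $L(h)=-\theta h+\log(1-\theta+\theta e^h)$ satisfies $L\le h^2/8$, which follows from $L(0)=L'(0)=0$ and $L''\le 1/4$. Applying this conditionally on $Z_1,\ldots,Z_{j-1}$ and peeling off one term at a time gives
\[
\mathbb{E}e^{\lambda(X-\mathbb{E}X)}\le \exp\Bigl(\lambda^2\sum_j c_j^2/8\Bigr).
\]

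\emph{Step 4: Chernoff.} By Markov's inequality,
\[
\mathbb{P}(X\ge\mathbb{E}X+s)\le \exp\Bigl(-\lambda s+\lambda^2\sum_j c_j^2/8\Bigr).
\]
Choosing $\lambda=4s/\sum_j c_j^2$ gives $\exp\bigl(-2s^2/\sum_j c_j^2\bigr)$. The lower tail follows by applying the same argument to $-X$, which has the same bounded differences.

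The main obstacle is Step 2. One must check that the conditional range of $D_j$ has length $c_j$, and not $2c_j$, which the naive bound $|D_j|\le c_j$ would give. Getting length $c_j$ is what produces the constant $2$ in the exponent, and it relies on independence through the representation of $X_j$ as the function $g$. Everything else is standard.
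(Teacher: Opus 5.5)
Your proof is correct, and it is essentially the same argument as the source: the paper gives no proof of its own and simply cites McDiarmid's 1989 survey, which uses this standard route — Doob martingale, conditional increments lying in an interval of length $c_j$ (via the independence representation $X_j=g(Z_1,\ldots,Z_j)$), Hoeffding's lemma, and the Chernoff choice $\lambda=4s/\sum_j c_j^2$. You also correctly identify that getting range $c_j$ rather than $2c_j$ is what yields the constant $2$ in the exponent.
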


\begin{lemma}[Chernoff bound for negatively associated indicators \cite{DubhashiRanjan1998}]\label{lem:chernoff-na}
Let $X=\sum_j X_j$ be a sum of Bernoulli random variables with mean $\mu$. If $(X_j)$ are independent, or more generally negatively associated, then for $0<a<\mu$,
\[
  \mathbb P(X<a)\le \exp\left(-\frac{(\mu-a)^2}{2\mu}\right).
\]
\end{lemma}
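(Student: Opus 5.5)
This is the standard lower-tail Chernoff bound, stated for negatively associated Bernoulli indicators. I would prove it by the exponential moment method, applied to the lower tail, i.e.\ to $-X$. Write $\mu=\mathbb E X=\sum_j p_j$ with $p_j=\mathbb P(X_j=1)$. For any $t>0$, Markov's inequality applied to $e^{-tX}$ gives
\[
\mathbb P(X<a)\le \mathbb P\bigl(e^{-tX}\ge e^{-ta}\bigr)\le e^{ta}\,\mathbb E\bigl[e^{-tX}\bigr].
\]

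The key step is to factor the moment generating function:
\[
\mathbb E\bigl[e^{-tX}\bigr]\le \prod_j \mathbb E\bigl[e^{-tX_j}\bigr].
\]
For independent indicators this holds with equality. For negatively associated indicators, I would use the defining property of negative association: for disjoint index sets $I,J$ and functions $f,g$ that are both nondecreasing (or both nonincreasing), $\mathbb E[f(X_I)g(X_J)]\le \mathbb E f(X_I)\,\mathbb E g(X_J)$. The maps $x\mapsto e^{-tx_j}$ are nonnegative and nonincreasing, so products of them over disjoint index sets are again nonnegative and nonincreasing. Peeling off one coordinate at a time and inducting on the number of indicators then gives the product bound. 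This is the only place negative association enters. It is the step I would check most carefully, in particular that the monotonicity directions line up and that nonnegativity is preserved under the products.

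Next, bound each factor. For each $j$,
\[
\mathbb E\bigl[e^{-tX_j}\bigr]=1-p_j(1-e^{-t})\le \exp\bigl(-p_j(1-e^{-t})\bigr),
\]
so
\[
\mathbb P(X<a)\le \exp\bigl(ta-\mu(1-e^{-t})\bigr).
\]
Writing $a=(1-\delta)\mu$ with $0<\delta<1$ and choosing $t=-\ln(1-\delta)$ gives the familiar bound
\[
\mathbb P(X<a)\le \left(\frac{e^{-\delta}}{(1-\delta)^{1-\delta}}\right)^{\mu}.
\]

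It then remains to show $-\delta-(1-\delta)\ln(1-\delta)\le -\delta^2/2$ for $\delta\in(0,1)$. I would do this with the expansion
\[
(1-\delta)\ln(1-\delta)=-\delta+\sum_{k\ge2}\frac{\delta^k}{k(k-1)}\ge -\delta+\frac{\delta^2}{2}.
\]
Since $\delta=(\mu-a)/\mu$, this yields
\[
\mathbb P(X<a)\le \exp\!\bigl(-\delta^2\mu/2\bigr)=\exp\!\Bigl(-\frac{(\mu-a)^2}{2\mu}\Bigr),
\]
which is the stated bound. Apart from the negative-association factorization, every step is routine calculus.
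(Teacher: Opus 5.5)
Your proof is correct and is the standard exponential-moment argument of Dubhashi and Ranjan. The paper does not reprove this lemma and only cites that reference; there, negative association yields $\mathbb E[e^{-tX}]\le\prod_j\mathbb E[e^{-tX_j}]$ through nonnegative nonincreasing functions on disjoint index sets, followed by the usual choice $t=-\ln(1-\delta)$ and the inequality $\delta+(1-\delta)\ln(1-\delta)\ge\delta^2/2$, exactly as in your proposal.
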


\begin{lemma}[Closure properties of negative association \cite{DubhashiRanjan1998,JoagDevProschan1983}]\label{lem:NA-closure}
For a fixed vertex $z$, the one-hot color indicators
\[
  X_{z,\gamma}:=\bbone_{\{\varphi(z)=\gamma\}},\qquad \gamma\in[q],
\]
are negatively associated. Independent unions of negatively associated families are negatively associated. Finally, applying coordinatewise monotone functions, all in the same direction, to pairwise disjoint subfamilies preserves negative association; in particular this applies to all nondecreasing functions and also to all nonincreasing functions.
\end{lemma}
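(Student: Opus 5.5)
The statement bundles three standard facts about negative association (NA), and I would prove each directly from the definition. A family $(Y_j)_{j\in J}$ is NA if, for every pair of disjoint index sets $I_1,I_2\subseteq J$ and every pair of coordinatewise nondecreasing functions $f,g$, we have $\mathbb E[f(Y_{I_1})g(Y_{I_2})]\le \mathbb E f(Y_{I_1})\,\mathbb E g(Y_{I_2})$. This is the zero-one lemma plus the closure properties of \cite{DubhashiRanjan1998,JoagDevProschan1983}. I would reproduce the short arguments rather than only cite them, since all the probability estimates below rest on them.

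\emph{One-hot indicators.} Fix $z$ and disjoint $I_1,I_2\subseteq[q]$, and let $f,g$ be nondecreasing. Subtracting constants does not change the covariance, so I would normalize $f(\mathbf 0)=g(\mathbf 0)=0$. By monotonicity this gives $f,g\ge0$ on $\{0,1\}$-vectors.
\begin{itemize}
\item Exactly one indicator $X_{z,\gamma}$ equals $1$, namely the one with $\gamma=\varphi(z)$.
\item If $\gamma\notin I_1$, then $X_{z,I_1}=\mathbf 0$ and so $f=0$. Likewise, if $\gamma\notin I_2$, then $g=0$.
\item Since $I_1\cap I_2=\emptyset$, for every outcome at least one of $f,g$ vanishes, so $fg\equiv0$.
\end{itemize}
Hence $\mathrm{Cov}(f,g)=-\mathbb Ef\,\mathbb Eg\le0$, which is the NA inequality.

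\emph{Independent unions.} Let $\mathbf X$ and $\mathbf Y$ be independent NA families, and let $f,g$ be nondecreasing on disjoint index sets of the union. I would use a conditioning argument.
\begin{itemize}
\item Condition on $\mathbf Y$. For each fixed value of $\mathbf Y$, the functions $f,g$ are nondecreasing in disjoint sets of $\mathbf X$-coordinates. Since $\mathbf X$ is independent of $\mathbf Y$, it keeps its law under the conditioning, so NA of $\mathbf X$ gives $\mathbb E[fg\mid \mathbf Y]\le \mathbb E[f\mid\mathbf Y]\,\mathbb E[g\mid\mathbf Y]$.
\item By independence, $\mathbb E[f\mid\mathbf Y]$ is obtained by integrating out $\mathbf X$. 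It is therefore a nondecreasing function of the $\mathbf Y$-coordinates in $f$'s index set, and similarly for $g$; these two sets of $\mathbf Y$-coordinates are disjoint.
\item Taking expectations and applying NA of $\mathbf Y$ gives $\mathbb E[fg]\le\mathbb Ef\,\mathbb Eg$.
\end{itemize}
By induction, this extends to any finite number of independent families, such as the families $\{X_{z,\gamma}\}_\gamma$ for distinct vertices $z$.

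\emph{Monotone functions of disjoint blocks.} Let $h_1,\dots,h_m$ all be nondecreasing (or all nonincreasing), with $h_k$ depending on a block $B_k$ and the blocks pairwise disjoint. Take disjoint sets $K_1,K_2\subseteq[m]$ and nondecreasing $f,g$.
\begin{itemize}
\item In the nondecreasing case, $f\circ(h_k)_{k\in K_1}$ and $g\circ(h_k)_{k\in K_2}$ are nondecreasing in the original variables. Their supports $\bigcup_{k\in K_1}B_k$ and $\bigcup_{k\in K_2}B_k$ are disjoint, so NA of the original family gives the inequality.
\item In the nonincreasing case, both compositions are nonincreasing. Their negatives are nondecreasing, and negating both functions leaves the covariance unchanged, so the same conclusion follows.
\end{itemize}

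I do not expect a real obstacle. The only delicate point is in the independent-union step: one must check that the conditional expectation stays monotone and depends only on the right coordinates. This uses independence, through Fubini, and not merely NA.
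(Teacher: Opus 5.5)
Your proposal is correct. The paper gives no proof of this lemma and only cites Dubhashi--Ranjan and Joag-Dev--Proschan; your three arguments are the standard ones from those references: the zero-one lemma via the normalization $f(\mathbf 0)=g(\mathbf 0)=0$, the conditioning argument for independent unions, and composition with same-direction monotone functions of disjoint blocks.
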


We shall repeatedly use the elementary inequalities
\begin{align}
  \log(1-x)&\ge -\frac{x}{1-x}, &&0<x<1, \tag{1}\label{eq:log1}\\
  \log(1-x)&\ge -x-x^2, &&0\le x\le \frac12. \tag{2}\label{eq:log2}
\end{align}

\subsection{The leftover-vertex events \texorpdfstring{$A_v$}{Av}}\label{sec:leftover}

Recall that $V(G)=S_1\cup\cdots\cup S_\ell\cup L$,
where $L$ is the set of vertices outside all cliques and near-cliques in the decomposition from Section~\ref{decomposition}. Fix a vertex $v\in L$. We expose a uniformly random $(\Delta-1)$-coloring of $V(G)$, and then uncolor every vertex that has the same color as one of its neighbors.
Now we consider the events $A_v$. Set $\Delta_0=5.3 \times10^6.$

\begin{lemma}\label{lem:Av}
For every $v\in L$ and every $\Delta\ge \Delta_0$,
$
    \mathbb{P}(A_v)
    \le
    \exp(-2.43\cdot 10^{-5}\Delta)
    +
    \exp(-2.2\cdot 10^{-5}\Delta).
$
\end{lemma}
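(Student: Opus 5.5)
We follow Reed's approach: count nonadjacent pairs in $N(v)$ that end up with a common colour and keep it, and show this count is at least $2$ with very high probability. The two exponential terms in the bound come from two tail estimates, one for the expectation-level count and one for concentration.

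By Lemma~\ref{lem:many-nonedges}, $N(v)$ contains at least $\Delta^2/50-\Delta$ nonadjacent pairs. Greedily extract a large \emph{matching} of such nonedges, i.e. pairwise disjoint nonadjacent pairs $\{x_j,y_j\}\subseteq N(v)$. Since each vertex of $N(v)$ lies in at most $\Delta$ pairs, and removing one pair destroys at most $2\Delta$ nonedges, we get about $\Delta/100$ disjoint pairs. If a cleaner count is possible, I would instead work with all nonedges and bound variance-type quantities through McDiarmid; the matching keeps the analysis simple.

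For each pair $j$ and colour $\gamma$, let $B_{j,\gamma}$ be the event that $\varphi(x_j)=\varphi(y_j)=\gamma$, that no neighbour of $x_j$ or $y_j$ receives $\gamma$, and, to avoid double counting, that $\gamma$ appears on no other vertex of $N(v)$ besides $x_j,y_j$ (optional). On $B_{j,\gamma}$ the colour $\gamma$ survives twice in $N(v)$. Let $X$ be the number of colours $\gamma$ for which some $B_{j,\gamma}$ holds. Then $A_v\subseteq\{X\le 1\}$.

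For the expectation, $\mathbb P(B_{j,\gamma})\ge q^{-2}(1-1/q)^{2\Delta}$, and inequality~\eqref{eq:log1} gives $(1-1/q)^{2\Delta}\ge e^{-2\Delta/(\Delta-2)}\approx e^{-2}$, or a somewhat smaller constant if we also require colour $\gamma$ to be absent from $N(v)$. Summing over $j$ and $\gamma$ gives $\EE X\ge c\Delta$ with an explicit $c$ of order $10^{-2}\cdot e^{-3}$.

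For concentration, write $X=Y-Z$, where $Y$ is the number of colours assigned to both ends of some pair (ignoring deletion) and $Z$ is the number of those colours that are later killed. $Y$ depends only on the colours of $N(v)$, and by Lemma~\ref{lem:NA-closure} its indicators are negatively associated, so Lemma~\ref{lem:chernoff-na} gives the first exponential term. For $Z$, changing one vertex's colour changes $Z$ by at most $2$, but the number of relevant coordinates is about $\Delta^2$. This makes naive McDiarmid too weak ($s^2/\Delta^2$ with $s\sim\Delta$ gives only $O(1)$). I would therefore use the standard certifiable trick: expose coordinates only for vertices in $N(v)\cup N(N(v))$, and bound the effect by noting that a recolouring of $w$ affects at most one colour's status, so the Lipschitz constant is $1$. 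Sums over $\Delta^2$ coordinates remain the main obstacle. The fix I would try first is to condition on the colours of $N(v)$ (exposing $Y$), and then treat $Z$ as a function of the colours outside $N(v)$. Each outside vertex $w$ affects only colours of pairs it is adjacent to, so its Lipschitz constant is $1$. To handle the $\Delta^2$ outside vertices, restrict the pairs to those $x_j,y_j$ whose colours are otherwise unique, and estimate $\EE Z$ by $\EE Y\cdot(1-(1-1/q)^{2\Delta})$. Failing McDiarmid, Talagrand's inequality with certificate size $1$ gives $\exp(-\Theta(\EE X))$, which yields the second term with an explicit constant such as $2.2\cdot10^{-5}$.

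Finally, combine: $\mathbb P(A_v)\le \mathbb P(Y<\EE Y-t_1)+\mathbb P(Z>\EE Z+t_2)$ with $\EE Y-\EE Z-t_1-t_2\ge 2$. This is checked numerically at $\Delta\ge\Delta_0$ using \eqref{eq:log2}, and it produces the two exponents $2.43\cdot10^{-5}$ and $2.2\cdot10^{-5}$.
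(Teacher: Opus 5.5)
\textbf{There is a genuine gap, and it comes from restricting to a matching of nonedges.} A fixed nonadjacent pair $\{x_j,y_j\}$ is monochromatic with probability only $1/q$. Any matching inside $N(v)$ has at most $\Delta/2$ pairs, and your greedy one has about $\Delta/100$. So $\EE X\le\sum_j\mathbb{P}(\varphi(x_j)=\varphi(y_j))\approx \Delta/(100q)\approx 10^{-2}$.

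Your step ``summing over $j$ and $\gamma$ gives $\EE X\ge c\Delta$'' is off by a factor of $\Delta$. There are only about $\Delta^2/100$ pairs $(j,\gamma)$, each contributing about $q^{-2}e^{-2}$, and this sums to a constant, not to $c\Delta$. By Markov, $\mathbb{P}(X\ge2)\le\EE X/2$ is tiny. Hence $\{X\le1\}$ has probability close to $1$, and the inclusion $A_v\subseteq\{X\le 1\}$ yields no bound at all.

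You must use all $\Theta(\Delta^2)$ nonedges from Lemma~\ref{lem:many-nonedges}. The paper handles overlapping pairs by counting colours rather than pairs. It lets $H_v$ be the number of colours that appear exactly twice on $N(v)$, on a nonadjacent pair. Each counted colour then corresponds to a unique pair, and linearity gives $\EE H_v\ge(\Delta^2/50-\Delta)\,q^{-1}(1-1/q)^{d-2}\ge0.00735\Delta$.

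Your concentration step is also left open. You note that McDiarmid over the roughly $\Delta^2$ outside coordinates is too weak, only gesture at Talagrand, and assert the final constants without deriving them. The paper avoids concentrating the number $Z$ of killed colours altogether by exposing in two stages.

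First, $H_v$ depends only on the at most $\Delta$ colours on $N(v)$ and is $2$-Lipschitz. McDiarmid therefore gives $\mathbb{P}(H_v<3.7\cdot10^{-4}\Delta)\le e^{-2.43\cdot10^{-5}\Delta}$ directly.

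Second, condition on the colouring of $N(v)$ and fix $L_A=\lfloor3.6\cdot10^{-4}\Delta\rfloor$ counted pairs with pairwise distinct colours $\gamma_j$. Each pair survives with probability at least $(1-1/q)^{2\Delta}\ge0.135$, since only the at most $2\Delta$ neighbours of $x_j,y_j$ outside $N(v)$ can kill it. The survival indicators are decreasing functions of the disjoint families $\{\bbone_{\{\varphi(z)=\gamma_j\}}\}_z$, so they are negatively associated by Lemma~\ref{lem:NA-closure}. Lemma~\ref{lem:chernoff-na} then bounds the probability of fewer than two survivors by $e^{-2.2\cdot10^{-5}\Delta}$.

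Your remark about conditioning on $N(v)$ and keeping only pairs whose colours are otherwise unique points in this direction. It does not close as written, for two reasons. The matching leaves you with $O(1)$ expected candidates instead of $\Theta(\Delta)$. And you are missing the negative-association observation that replaces concentration of $Z$.
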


\begin{proof}
Fix $v\in L$ and let $H_v$ be the number of colors $\gamma\in[q]$ such that
$\gamma$ appears exactly twice on $N(v)$ and the two vertices of $N(v)$
receiving color $\gamma$ are nonadjacent.

\begin{claim}\label{claim:Hv-exp}
For every $v\in L$ and every $\Delta\ge \Delta_0$,
$
    \EE H_v\ge 0.00735\Delta.
$
\end{claim}

\begin{proof}[Proof of Claim~\ref{claim:Hv-exp}]
Let $d=|N(v)|\le \Delta$, and let $P_v$ be the set of unordered nonadjacent
pairs in $N(v)$.  By Lemma~\ref{lem:many-nonedges},
\[
    |P_v|\ge  \frac{\Delta^2}{50}-\Delta.
\]
For a fixed pair $(x,y)\in P_v$, the probability that $x$ and $y$ receive the
same color and that no other vertex of $N(v)$ receives this color is
\[
    \frac{1}{q}\left(1-\frac{1}{q}\right)^{d-2}.
\]
Since $d\le \Delta$ and $q=\Delta-1$,
\[
    \left(1-\frac{1}{q}\right)^{d-2}
    \ge
    \left(1-\frac{1}{q}\right)^{\Delta-2}
    =
    \left(1-\frac{1}{q}\right)^{q-1}
    \ge e^{-1}.
\]
Each color counted by $H_v$ corresponds to a unique pair in $P_v$.  Hence, by
linearity of expectation,
\[
    \EE H_v
    \ge
    \left( \frac{\Delta^2}{50}-\Delta\right)
    \frac{e^{-1}}{\Delta-1}
    =
    \frac{\Delta(\Delta-50)}{50(\Delta-1)}e^{-1}.
\]
For $\Delta\ge \Delta_0$, the right-hand side is at least $0.00735\Delta$.
\end{proof}

\begin{claim}\label{claim:Hv-conc}
For every $v\in L$ and every $\Delta\ge \Delta_0$,
$
    \mathbb{P}(H_v<3.7\cdot 10^{-4}\Delta)
    \le
    \exp(-2.43\cdot 10^{-5}\Delta).
$
\end{claim}

\begin{proof}[Proof of Claim~\ref{claim:Hv-conc}]
The random variable $H_v$ depends only on the colors of the vertices in
$N(v)$.  Changing the color of one vertex can affect only the status of the
old color and the new color, and each fixed color can enter or leave the count
by at most one.  Thus $H_v$ is $2$-Lipschitz in each of at most $\Delta$
coordinates.  By Claim~\ref{claim:Hv-exp},
\[
    \EE H_v-3.7\cdot 10^{-4}\Delta
    \ge
    0.00698\Delta.
\]
McDiarmid's inequality gives
\[
    \mathbb{P}(H_v<3.7\cdot 10^{-4}\Delta)
    \le
    \exp\!\left(
        -\frac{2(0.00698\Delta)^2}{4\Delta}
    \right)
    \le
    \exp(-2.43\cdot 10^{-5}\Delta).
\]\end{proof}

We next condition on a coloring of $N(v)$ for which
$
    H_v\ge 3.7\cdot 10^{-4}\Delta.
$
Choose, in a deterministic way, a family
\[
    Q_v=\{(x_j,y_j,\gamma_j):1\le j\le L_A\},
    \qquad
    L_A:=\lfloor 3.6\cdot 10^{-4}\Delta\rfloor,
\]
where $x_j,y_j\in N(v)$ are nonadjacent, both have color $\gamma_j$, and no
other vertex of $N(v)$ has color $\gamma_j$.  The colors $\gamma_j$ are
pairwise distinct.

\begin{claim}\label{claim:Av-survival}
For every fixed coloring of $N(v)$ with $H_v\ge 3.7\cdot 10^{-4}\Delta$, after $Q_v$ is chosen deterministically as above, the conditional probability over the colors outside $N(v)$ that $A_v$ occurs is at most
$
    \exp(-2.2\cdot 10^{-5}\Delta).
$
\end{claim}

\begin{proof}[Proof of Claim~\ref{claim:Av-survival}]
Expose all colors outside $N(v)$.  For $1\le j\le L_A$, let $I_j$ be the
indicator of the event that the pair $x_j,y_j$ survives conflict deletion
with color $\gamma_j$.  Since $x_jy_j\notin E(G)$ and no other vertex of
$N(v)$ has color $\gamma_j$, this pair can be killed only by a vertex outside
$N(v)$ adjacent to $x_j$ or $y_j$ that receives color $\gamma_j$.  There are at
most $2\Delta$ such possible blockers.  Hence
\[
    \mathbb{P}(I_j=1)
    \ge
    \left(1-\frac{1}{q}\right)^{2\Delta}.
\]
By \eqref{eq:log1},
\[
    \left(1-\frac{1}{q}\right)^{2\Delta}
    \ge
    \exp\!\left(-\frac{2\Delta}{q-1}\right)
    =
    \exp\!\left(-\frac{2\Delta}{\Delta-2}\right)
    \ge 0.135
\]
for $\Delta\ge \Delta_0$.

The indicators $I_j$ are negatively associated.  Indeed, for each unexposed
vertex $z$, the one-hot indicators $X_{z,\gamma_j}$ are negatively
associated, and independent unions over $z$ remain negatively associated.
Moreover, each $I_j$ is a coordinatewise decreasing function of the variables
with target color $\gamma_j$.  Since the colors $\gamma_j$ are pairwise
distinct, the variable subfamilies belonging to different $j$ are pairwise
disjoint.  Lemma~\ref{lem:NA-closure} applies.

Let
\[
    S_A:=\sum_{j=1}^{L_A} I_j.
\]
Then
\[
    \mu_A:=\EE S_A
    \ge
    0.135\lfloor 3.6\cdot 10^{-4}\Delta\rfloor
    \ge
    4.86\cdot 10^{-5}\Delta-0.135.
\]
For $\Delta\ge \Delta_0$, this is larger than $2$.  By
Lemma~\ref{lem:chernoff-na},
\[
    \mathbb{P}(S_A<2)
    \le
    \exp\!\left(-\frac{(\mu_A-2)^2}{2\mu_A}\right)
    \le
    \exp(-2.2\cdot 10^{-5}\Delta),
\]
where the last inequality is checked at $\Delta_0$ and then follows by
monotonicity.  If $S_A\ge 2$, then $N(v)$ contains at least two repeated
colors after conflict deletion.  Hence $A_v\subseteq\{S_A<2\}$ under the
conditioning.
\end{proof}

Combining Claims~\ref{claim:Hv-conc} and~\ref{claim:Av-survival}, we obtain
\[
\begin{aligned}
    \mathbb{P}(A_v)
    &\le
    \mathbb{P}(H_v<3.7\cdot 10^{-4}\Delta)
    +
    \mathbb{P}(A_v\mid H_v\ge 3.7\cdot 10^{-4}\Delta)  \\
    &\le
    \exp(-2.43\cdot 10^{-5}\Delta)
    +
    \exp(-2.2\cdot 10^{-5}\Delta).
\end{aligned}
\]
This completes the proof.
\end{proof}

\subsection{The near-clique events \texorpdfstring{$E_i$}{Ei}}\label{sec:nearclique}
We next turn to the bad events associated with near-cliques.
\begin{lemma}\label{lem:Ei}
For every near-clique index $i$ and every $\Delta\ge \Delta_0$, $    \mathbb{P}(E_i)\le \exp(-0.15\Delta).$
\end{lemma}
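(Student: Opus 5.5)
We work only inside the clique formed by the neighbours of $u_i$ in $C_i$ and never use the rest of the graph. Fix a near-clique $S_i=C_i\cup\{u_i\}$, and fix a set $W\subseteq N_{C_i}(u_i)$ with $|W|=m:=\lceil 4\Delta/5\rceil$. Such a set exists because $u_i$ has at least $\frac45\Delta$ neighbours in $C_i$. If at least two vertices of $W$ are uncoloured after conflict deletion, condition~$(ii)$ holds. So it suffices to bound the probability that at most one vertex of $W$ is uncoloured.

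The key observation is that $W$ is a clique. If two vertices of $W$ receive the same colour under $\varphi$, both are adjacent and both are uncoloured. Hence every vertex of $W$ that survives conflict deletion has a colour used by no other vertex of $W$. Therefore, on the event that at most one vertex of $W$ is uncoloured, there is a set $W'\subseteq W$ with $|W'|=m-1$ whose vertices receive pairwise distinct colours under the initial uniform colouring. Indeed, if no vertex is uncoloured all $m$ colours are distinct, and otherwise we delete the single uncoloured vertex.

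We then take a union bound over the $m$ choices of $W'$. The colours are independent and uniform on $[q]$, so each fixed $W'$ is rainbow with probability
\[
\prod_{j=0}^{m-2}\Bigl(1-\frac{j}{q}\Bigr).
\]
This gives
\[
\mathbb P(E_i)\le m\prod_{j=0}^{m-2}\Bigl(1-\frac{j}{q}\Bigr)
\le \Delta\exp\Bigl(\sum_{j=0}^{m-2}\log\bigl(1-j/q\bigr)\Bigr).
\]

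The remaining work is numerical. Compare the sum with the integral $q\int_0^{x}\log(1-t)\,dt$, where $x=(m-2)/q\approx 0.8$. The integral equals $-q\bigl(x+(1-x)\log(1-x)\bigr)$, which is about $-0.478\,q$; note that $q=\Delta-1$. Since $\log(1-t)$ is decreasing, the sum is at most the integral over $[0,(m-2)/q]$ up to an $O(\log\Delta)$ boundary term. Alternatively, one can split the sum and apply \eqref{eq:log1} on blocks. This yields $\mathbb P(E_i)\le \Delta\, e^{-0.47\Delta+O(\log \Delta)}$, and the prefactor $\Delta$ is absorbed for $\Delta\ge\Delta_0$, giving $\mathbb P(E_i)\le\exp(-0.15\Delta)$ with a large margin.

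The only delicate step is getting a clean rigorous bound on $\sum_j\log(1-j/q)$ up to $j\approx 0.8q$, where \eqref{eq:log2} no longer applies. I would first try the monotone integral comparison, which handles the whole range at once. Since the target constant $0.15$ is much weaker than the true rate of about $0.478$, even crude estimates suffice. For example, keep only the terms with $j\ge 0.4q$, each of which is at most $\log 0.6$, about $0.4q$ of them; this already gives an exponent of about $-0.2\Delta$.
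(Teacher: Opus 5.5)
Your proof is correct, and it reaches the bound by a different, more elementary route than the paper.

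Both arguments rest on the same observation. Since $N_{C_i}(u_i)$ is a clique, any colour repeated on it uncolours two neighbours of $u_i$. So $E_i$ forces a fixed set of about $4\Delta/5$ such neighbours to be rainbow under $\varphi$.

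\begin{itemize}
\item The paper introduces $X_i$, the number of distinct colours on $R_i$, and writes $E_i\subseteq\{X_i=r\}$. It computes $r-\mathbb{E}X_i\ge 0.249\Delta$ via \eqref{eq:log1}, then applies McDiarmid's inequality with $1$-Lipschitz coordinates, which gives exactly the exponent $0.15$.
\item You compute the rainbow probability directly as the birthday-problem product and bound $\sum_j\log(1-j/q)$ by an integral. This needs no concentration inequality and gives a much stronger exponent, about $0.478\Delta$ instead of $0.15\Delta$.
\end{itemize}

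The paper's route has the advantage of matching the McDiarmid template used for $A_v$ and $F_i$, and it needs only a first-moment computation. In fact your product is exactly $\mathbb{P}(X_i=r)$, the quantity the paper bounds by concentration.

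Two simplifications would tighten your write-up:
\begin{itemize}
\item The passage to $W'$ and the union-bound factor $m$ are unnecessary. The same clique observation shows that on $E_i$ all $m$ vertices of $W$ receive distinct colours, since if two shared a colour both would be uncoloured.
\item No boundary term is needed in the integral comparison. Because $\log(1-t)$ is decreasing and vanishes at $t=0$, one has $\sum_{j=0}^{m-2}\log(1-j/q)\le q\int_0^{(m-2)/q}\log(1-t)\,dt$ exactly.
\end{itemize}

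Neither improvement affects the final Local Lemma computation, where the $F_i$ bound is the dominant term.
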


\begin{proof}
Fix a set $R_i\subseteq N(u_i)\cap C_i$ with $  r:=|R_i|=\left\lfloor\frac{4\Delta}{5}\right\rfloor.$
Let $X_i$ be the number of distinct colors appearing on $R_i$ before conflict deletion. If two vertices of $R_i$ receive the same color, then they are adjacent in the clique $C_i$, and both are uncolored by conflict deletion. Thus $u_i$ has at least two uncolored neighbors in $C_i$. Consequently, $
  E_i\subseteq \{X_i=r\}. $

For each color $\gamma\in[q]$, the probability that $\gamma$ appears on $R_i$ is $1-(1-1/q)^r$. Hence
\[
  \mathbb{E}X_i=q\left(1-\left(1-\frac1q\right)^r\right),
\]
and
\[
  r-\mathbb{E}X_i=r-q+q\left(1-\frac1q\right)^r.
\]
Using $r\ge 4\Delta/5-1$, $r\le 4\Delta/5$, and \eqref{eq:log1},
\[
  r-\mathbb{E}X_i
  \ge -\frac{\Delta}{5}+(\Delta-1)\exp\left(-\frac{4\Delta}{5(\Delta-2)}\right).
\]
The ratio
\[
  \left(1-\frac1\Delta\right)\exp\left(-\frac{4\Delta}{5(\Delta-2)}\right)
\]
is increasing for $\Delta\ge \Delta_0$, because the derivative of its logarithm is
\[
  \frac{1}{\Delta(\Delta-1)}+\frac{8}{5(\Delta-2)^2}>0.
\]
At $\Delta=\Delta_0$ it is greater than $0.449$, and hence
\[
  r-\mathbb{E}X_i\ge 0.249\Delta>0.245\Delta. \tag{6}
\]
Changing one color on $R_i$ can change $X_i$ by at most one. By Lemma~\ref{lem:mcdiarmid} and (6),
\[
  \mathbb P(E_i)\le \mathbb P(X_i=r)
  \le \mathbb P(X_i\ge \mathbb{E}X_i+0.245\Delta)
  \le \exp\left(-\frac{2(0.245\Delta)^2}{r}\right)
  \le \exp(-0.15\Delta).
\]
\end{proof}

\subsection{The large-clique events \texorpdfstring{$F_i$}{Fi}}\label{subsec:large-clique}

Fix a large clique $C=C_i$ and write
\[
  |C|=n=\Delta-p,
  \qquad 1\le p\le \frac{\Delta}{5}-1.
\]
We prove the large-clique estimate
\[
  \mathbb{P}(F_i)\le e^{-3.5\cdot10^{-5}\Delta}+e^{-1.7\cdot10^{-5}\Delta}+e^{-2.1\cdot10^{-5}\Delta}+e^{-1.5\cdot10^{-5}\Delta}.
\]

Expose only the colors of vertices of $C$.
A color is called a
\emph{singleton color on $C$} if it appears exactly once on $C$.
By Lemma~\ref{lem:supplied-triples}, fix a family
$
  \mathcal{T}=\{T_t=(a_t,b_t,c_t):t\in[m]\}
$
of pairwise disjoint supplied triples with $m\ge(\Delta-2)/11$. Thus $c_t\in C$, $a_t,b_t\notin C$, $a_t,b_t\in N(c_t)$, and
$
  |N_C(a_t)|, |N_C(b_t)|\le p+3.
$
A supplied
triple $(a_t,b_t,c_t)$ is called \emph{internally duplicate} if the color of
$c_t$ appears on at least one other vertex of $C$.
\begin{lemma}\label{lem:endpoint-singletons}
Expose the colors of vertices of $C$ first. For each supplied endpoint
$
  z\in \{a_t,b_t:t\in[m]\},
$
let
$
  S_z=\{\gamma\in[q]:\gamma\text{ appears exactly once on }C,\text{ and its unique vertex in }C\text{ is not adjacent to }z\}.
$
Then, for every $\Delta\ge\Delta_0$,
\[
  \mathbb{P}\bigl(\exists z: |S_z|<0.26\Delta\bigr)
  \le \exp(-3.5\cdot10^{-5}\Delta).
\]
\end{lemma}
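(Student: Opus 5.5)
Fix a supplied endpoint $z$. The plan is to bound $\mathbb{P}(|S_z|<0.26\Delta)$ for this single $z$ by combining an expectation estimate with McDiarmid's inequality (Lemma~\ref{lem:mcdiarmid}), and then to take a union bound over the at most $2m\le 2\Delta$ endpoints. Only the colors on $C$ are exposed, so $|S_z|$ is a function of the $n=\Delta-p$ independent colors $\varphi(w)$, $w\in C$.

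Let $U_z=C\setminus N_C(z)$. Then $|U_z|\ge n-(p+3)=\Delta-2p-3\ge 3\Delta/5-1$, using $p\le\Delta/5-1$. A color $\gamma$ lies in $S_z$ exactly when a unique vertex of $C$ receives $\gamma$ and that vertex lies in $U_z$. Summing over $w\in U_z$, this gives
\[
  \mathbb{E}|S_z|=|U_z|\left(1-\frac1q\right)^{n-1}.
\]
Since $n-1\le \Delta-2=q-1$, the bound $(1-1/q)^{q-1}\ge e^{-1}$ (as in Claim~\ref{claim:Hv-exp}) yields $\mathbb{E}|S_z|\ge (3\Delta/5-1)e^{-1}\ge 0.2207\Delta-1$.

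This lower bound is below $0.26\Delta$, so the crude estimate is not enough, and this is the main obstacle. To get past it, I will use $p$ more carefully. The expectation is at least
\[
  (\Delta-2p-3)\left(1-\frac1q\right)^{\Delta-p-1}\approx(\Delta-2p-3)\,e^{-(\Delta-p)/\Delta}.
\]
Writing $x=p/\Delta\in[0,1/5]$, this is approximately $\Delta\,(1-2x)e^{x-1}$. Its derivative in $x$ is proportional to $(1-2x)-2=-1-2x<0$, so it is decreasing, and its minimum at $x=1/5$ is about $0.6\,e^{-0.8}\Delta\approx 0.2696\Delta$. After absorbing the lower-order terms using $\Delta\ge\Delta_0$ and inequality~\eqref{eq:log1}, I expect $\mathbb{E}|S_z|\ge 0.2695\Delta$. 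This leaves a margin of roughly $s=0.0095\Delta$ above the threshold $0.26\Delta$.

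For concentration, changing the color of a single vertex of $C$ affects only the status of the old color and the new color, so $|S_z|$ changes by at most $2$. Then McDiarmid gives
\[
  \mathbb{P}(|S_z|<0.26\Delta)\le \exp\!\left(-\frac{2s^2}{4n}\right)\le \exp(-4.5\cdot10^{-5}\Delta).
\]
The union bound over at most $2\Delta$ endpoints multiplies this by $2\Delta$, and since $\Delta\ge\Delta_0$ we have $2\Delta\le e^{10^{-5}\Delta}$. This gives the stated $\exp(-3.5\cdot10^{-5}\Delta)$. The only delicate point is checking the constants at the extreme $p=\Delta/5-1$. If the margin turns out too thin there, I would sharpen the Lipschitz constant by coupling, or use the exact value $|U_z|\ge n-p-3$ without rounding.
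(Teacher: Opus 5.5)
Your proposal is correct and follows essentially the same route as the paper. Both arguments use the exact count $|S_z|=\sum_{w\in C\setminus N_C(z)}\mathbf{1}\{\varphi(w)\text{ unique on }C\}$, monotonicity in $p$ of $(\Delta-2p-3)(1-1/q)^{\Delta-p-1}$ with worst case $p=\Delta/5-1$ (about $0.2696\Delta$), McDiarmid with Lipschitz constant $2$ over $n\le\Delta$ coordinates, and a union bound over at most $2\Delta$ endpoints; your margin $0.0095\Delta$ is even slightly better than the paper's $0.009\Delta$. The one step to make rigorous is your ``$\approx$'' derivative in $x=p/\Delta$. The paper does this by differentiating $\log g(p)$ exactly and using $-\log\bigl(1-1/(\Delta-1)\bigr)\le 1/(\Delta-2)$ from \eqref{eq:log1}.
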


\begin{proof}
Fix a supplied endpoint $z$. Since $|N_C(z)|\le p+3$, the vertex $z$ has at least
$
  |C|-(p+3)=\Delta-2p-3
$
non-neighbors in $C$. Each such non-neighbor contributes to $S_z$ if its color appears nowhere else on $C$. Therefore
\[
  \mathbb{E} |S_z|\ge (\Delta-2p-3)\left(1-\frac1q\right)^{\Delta-p-1}. \tag{7}
\]
This is a lower bound, not an equality, because $z$ may have more than $\Delta-2p-3$ non-neighbors in $C$.

Let
\[
  g(p):=(\Delta-2p-3)\left(1-\frac1{\Delta-1}\right)^{\Delta-p-1}.
\]
For $1\le p\le \Delta/5-1$,
\[
  \frac{d}{dp}\log g(p)
  =-\frac{2}{\Delta-2p-3}-\log\left(1-\frac1{\Delta-1}\right).
\]
By \eqref{eq:log1},
\[
  -\log\left(1-\frac1{\Delta-1}\right)\le \frac1{\Delta-2},
\]
and since $p\ge1$,
\[
  -\frac{2}{\Delta-2p-3}\le -\frac2{\Delta-5}.
\]
Thus
\[
  \frac{d}{dp}\log g(p)
  \le -\frac2{\Delta-5}+\frac1{\Delta-2}<0.
\]
Hence $g$ is decreasing on the permitted interval, and the worst case is $p=\Delta/5-1$. By (7),
\[
  \mathbb{E} |S_z|
  \ge \left(\frac{3\Delta}{5}-1\right)
       \left(1-\frac1{\Delta-1}\right)^{4\Delta/5}.
\]
Again using \eqref{eq:log1},
\[
  \left(1-\frac1{\Delta-1}\right)^{4\Delta/5}
  \ge \exp\left(-\frac{4\Delta}{5(\Delta-2)}\right)
  \ge e^{-0.800001}
\]
for $\Delta\ge\Delta_0$. Consequently
\[
  \mathbb{E} |S_z|>0.269\Delta. \tag{8}
\]

Changing one color assigned to a vertex of $C$ can affect only the old color and the new color, so $|S_z|$ is $2$-Lipschitz in each of the $n\le\Delta$ coordinates. By Lemma~\ref{lem:mcdiarmid} and (8),
\[
  \mathbb{P}(|S_z|<0.26\Delta)
  \le \exp\left(-\frac{2(0.009\Delta)^2}{4\Delta}\right)
  \le \exp(-4.0\cdot10^{-5}\Delta).
\]
There are at most $2m\le 2\Delta$ supplied endpoints. Since
\[
  2\Delta\exp(-4.0\cdot10^{-5}\Delta)
  \le \exp(-3.5\cdot10^{-5}\Delta)
\]
for $\Delta\ge\Delta_0$, the union bound proves the lemma.
\end{proof}

\begin{lemma}\label{lem:many-duplicate-centers}
 Let $D_C$ be the number of internally duplicate triples in $\mathcal{T}$, and put
$
  L_F:=\lfloor0.038\Delta\rfloor.
$
Then, for every $\Delta\ge\Delta_0$,
\[
  \mathbb{P}(D_C<L_F)\le \exp(-1.7\cdot10^{-5}\Delta).
\]
\end{lemma}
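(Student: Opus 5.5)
We bound $\mathbb{E}D_C$ from below and then apply McDiarmid's inequality (Lemma~\ref{lem:mcdiarmid}) to the colors of the vertices of $C$ only. By definition, $D_C$ depends only on the coloring of $C$.

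\emph{Expectation.} A triple $T_t$ is internally duplicate exactly when the color of $c_t$ appears on at least one of the other $n-1$ vertices of $C$. This has probability
\[
1-\left(1-\frac1q\right)^{n-1}.
\]
Since $n=\Delta-p\ge 4\Delta/5+1$ (because $C\in\mathcal C$) and $q=\Delta-1$, we get
\[
\left(1-\frac1q\right)^{n-1}\le \exp\left(-\frac{n-1}{q}\right)\le e^{-4/5+o(1)}\approx 0.4493 .
\]
So each triple is internally duplicate with probability at least about $0.5507$. By Lemma~\ref{lem:supplied-triples} we have $m\ge(\Delta-2)/11$, hence
\[
\mathbb{E}D_C\ge \frac{\Delta-2}{11}\cdot 0.55\ge 0.0500\Delta
\]
for $\Delta\ge\Delta_0$. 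This leaves a gap of about $0.012\Delta$ above $L_F=\lfloor 0.038\Delta\rfloor$. If the monotonicity in $p$ matters, I would use the worst case $p=\Delta/5-1$, giving $n-1=4\Delta/5$ exactly.

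\emph{Lipschitz constant.} This is the step that needs care, since $D_C$ is a function of the $n\le\Delta$ independent colors on $C$. Recoloring a single vertex $w\in C$ from $\alpha$ to $\beta$ can change the status only of centers $c_t$ colored $\alpha$ or $\beta$, or of $w$ itself if $w$ is a center. A naive bound is therefore poor: many centers could share color $\alpha$, but then they remain duplicates after the change, except when $\alpha$ had exactly two occurrences.

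Concretely, a center of color $\alpha$ other than $w$ loses duplicate status only if it was the unique other $\alpha$-vertex. That is at most one vertex. Symmetrically, at most one center of color $\beta$ gains duplicate status, namely when $\beta$ previously appeared exactly once. Together with $w$ itself, the change in $D_C$ is at most $2$. I would prove this carefully, since it is the main point where the argument could go wrong, and take $c_j=2$.

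\emph{Concentration.} McDiarmid with $s=\mathbb{E}D_C-L_F\ge 0.012\Delta$ gives
\[
\mathbb{P}(D_C<L_F)\le\exp\left(-\frac{2(0.012\Delta)^2}{4\Delta}\right)=\exp(-7.2\cdot10^{-5}\Delta),
\]
which is comfortably below $\exp(-1.7\cdot10^{-5}\Delta)$.

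If the Lipschitz constant has to be taken as $3$ instead, the bound becomes $\exp(-2\cdot 0.012^2\Delta/9)\approx\exp(-3.2\cdot10^{-5}\Delta)$, which still suffices. The remaining work is purely numerical: confirm the constant $0.0500$ and the gap at $\Delta=\Delta_0$.
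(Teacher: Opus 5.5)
Your proposal is correct and is essentially the paper's proof: the same per-center bound $1-(1-1/q)^{|C|-1}\ge 1-e^{-4\Delta/(5(\Delta-1))}>0.55$, the same expectation $\mathbb{E}D_C\ge 0.55(\Delta-2)/11=0.05\Delta-0.1$ (your display drops the harmless $-0.1$), and McDiarmid over the at most $\Delta$ colors on $C$. The only difference is the Lipschitz constant. Your sign argument, that the $\alpha$-side can only lose a center and the $\beta$-side can only gain one, correctly gives $2$. The paper settles for the cruder bound $4$, which still yields $2(0.0119)^2/16\approx 1.77\cdot10^{-5}$.
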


\begin{proof}
For a fixed center $c_t$,
\[
  \mathbb{P}(t\text{ is internally duplicate})
  =1-\left(1-\frac1q\right)^{|C|-1}.
\]
Since $|C|-1=\Delta-p-1\ge 4\Delta/5$, we have
\[
  \mathbb{P}(t\text{ is internally duplicate})
  \ge 1-\exp\left(-\frac{4\Delta}{5(\Delta-1)}\right)>0.55.
\]
Therefore
\[
  \mathbb{E} D_C\ge 0.55m\ge0.55\cdot\frac{\Delta-2}{11}=0.05\Delta-0.1. \tag{9}
\]
In particular, since $L_F\le0.038\Delta$ and $\Delta\ge\Delta_0$,
\[
  \mathbb E D_C-L_F\ge 0.012\Delta-0.1\ge0.0119\Delta. \tag{10}
\]

Changing one color inside $C$ from $\alpha$ to $\beta$ can affect duplicate status only for centers whose colors are $\alpha$ or $\beta$. For one fixed color, the duplicate status can change only when the corresponding color class crosses between size $1$ and size $2$; at that moment the color class contains at most two vertices, and because the supplied triples are pairwise disjoint, at most two selected centers. Thus $D_C$ is $4$-Lipschitz in each color on $C$. By Lemma~\ref{lem:mcdiarmid} and (10),
\[
  \mathbb{P}(D_C<L_F)
  \le \exp\left(-\frac{2(0.0119\Delta)^2}{16\Delta}\right)
  \le \exp(-1.7\cdot10^{-5}\Delta).
\]
\end{proof}

Let
\[
  G_1:=\{\forall z,\ |S_z|\ge0.26\Delta\},\qquad
  G_2:=\{D_C\ge L_F\}.
\]
By Lemmas~\ref{lem:endpoint-singletons} and~\ref{lem:many-duplicate-centers},
\[
  \mathbb{P}(G_1^c)\le e^{-3.5\cdot10^{-5}\Delta},\qquad
  \mathbb{P}(G_2^c)\le e^{-1.7\cdot10^{-5}\Delta}. \tag{11}
\]

\begin{lemma}\label{lem:clean-active}
Condition on $G_1\cap G_2$. From the internally duplicate triples, choose deterministically a subfamily
$
  B\subseteq [m],\ |B|=L_F=\lfloor0.038\Delta\rfloor.
$
For $t\in B$, let
$
  \alpha_t:=\varphi(a_t),\beta_t:=\varphi(b_t).
$
Call $t$ clean active if
$
  \alpha_t\in S_{a_t}, \beta_t\in S_{b_t}, \alpha_t\ne \beta_t,
$
and neither $\alpha_t$ nor $\beta_t$ appears among the other $2L_F-2$ endpoint colors $\{\alpha_s,\beta_s:s\in B, s\ne t\}$. Let $Y$ be the number of clean active triples in $B$, and set $
  r:=\lfloor3.5\cdot10^{-4}\Delta\rfloor.
$
Then
\[
  \mathbb{P}(Y<r\mid G_1\cap G_2)
  \le \exp(-2.1\cdot10^{-5}\Delta).
\]
Consequently, with complementary probability there exists a clean active subfamily $R\subseteq B$ with $|R|=r$.
\end{lemma}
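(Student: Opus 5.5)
We condition on the colors of $C$ and fix a realization in $G_1\cap G_2$. This fixes the sets $S_z$ and the deterministic subfamily $B$. The endpoint colors $\alpha_t,\beta_t$ for $t\in B$ are then still independent and uniform on $[q]$. This holds because the endpoints $a_t,b_t$ lie outside $C$ and the triples are pairwise disjoint, so the $2L_F$ endpoints are distinct vertices whose colors have not been exposed. Thus $Y$ becomes a function of $2L_F$ independent uniform coordinates, and the plan is the usual two steps: a lower bound on $\EE[Y\mid\text{colors on }C]$, followed by McDiarmid's inequality (Lemma~\ref{lem:mcdiarmid}) in these $2L_F$ coordinates.

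\textbf{Expectation.} Fix $t\in B$. By $G_1$, the probability that $\alpha_t\in S_{a_t}$ and $\beta_t\in S_{b_t}$ is at least $(0.26\Delta/q)^2\ge 0.0676$. Given this, $\alpha_t=\beta_t$ has probability at most $1/q$. Each of the other $2L_F-2$ endpoint colors is independent of $(\alpha_t,\beta_t)$, and hits $\{\alpha_t,\beta_t\}$ with probability at most $2/q$. So the probability that none of them does is at least
\[
(1-2/q)^{2L_F}\ge \exp(-0.076\Delta\cdot 2/(q-2))\ge e^{-0.153}\approx 0.858.
\]
Hence $\mathbb P(t\text{ clean active})\ge 0.0676\cdot 0.858-1/q\ge 0.057$, and
\[
\EE Y\ge 0.057\lfloor 0.038\Delta\rfloor\ge 0.0021\Delta.
\]
This is far above $r\approx 3.5\cdot10^{-4}\Delta$; the deviation is $s\ge 0.00175\Delta$.

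\textbf{Lipschitz constant.} Changing one endpoint color, say $\alpha_t$ from $\gamma$ to $\gamma'$, can change the status of $t$ itself. It can also change the status of those $s\ne t$ whose endpoint colors equal $\gamma$ or $\gamma'$. This is the main obstacle: a color shared by many endpoints could make the dependence large. I would handle it by noting that a triple $s$ that is clean active has colors occurring nowhere else among the endpoint colors. Therefore:
\begin{itemize}
\item Moving $\alpha_t$ onto $\gamma'$ can destroy clean activity of at most one $s$, namely the unique $s$ with $\gamma'$ among its colors, if that $s$ was clean.
\item Moving $\alpha_t$ off $\gamma$ can create clean activity for at most one $s$, namely one for which $\gamma$ was shared only with $t$.
\end{itemize}
Together with $t$ itself, $Y$ changes by at most $3$. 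With $c_j=3$ for $2L_F\le 0.076\Delta$ coordinates, McDiarmid gives
\[
\mathbb P(Y<r)\le \exp\!\left(-\frac{2s^2}{9\cdot 0.076\Delta}\right)\approx \exp(-9\cdot10^{-6}\Delta)\cdot\exp(\dots).
\]
The constants must be checked to reach $2.1\cdot10^{-5}$. If this crude version is too weak, I would sharpen the expectation, for instance by taking $(0.26\Delta/(\Delta-1))^2$ more exactly, or use the Lipschitz constant $2$ by a finer case analysis.

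\textbf{Conclusion.} Averaging over the colorings of $C$ in $G_1\cap G_2$ gives the conditional bound. Taking $R$ to be any $r$ clean active triples of $B$ yields the final assertion.
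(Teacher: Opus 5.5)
Your overall route is the paper's: condition on $\varphi|_C$ in $G_1\cap G_2$, use that the $2L_F$ endpoints are distinct vertices outside $C$ whose colors stay independent and uniform, show $\mathbb{P}(t\text{ clean active})>0.057$, and apply McDiarmid in the endpoint coordinates. However, as written the proposal does not prove the stated bound. With $c_j=3$ your exponent is $2(0.00175)^2/(9\cdot0.076)\approx 9\cdot10^{-6}$, well below $2.1\cdot10^{-5}$.

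Your first fallback, sharpening the expectation, cannot close this gap. $G_1$ only guarantees $|S_z|\ge0.26\Delta$, so no lower bound on $\mathbb{E}Y$ valid throughout $G_1\cap G_2$ can beat roughly $0.038\cdot0.26^2e^{-0.152}\Delta\approx0.0022\Delta$. That gives $s\approx0.00186\Delta$, and with $c_j=3$ an exponent of only about $1.0\cdot10^{-5}$. The constant is not cosmetic. An exponent near $10^{-5}$ here would make the final Local Lemma check at $\Delta_0=5.3\times10^6$ fail, since $5\log\Delta_0\approx77.4$ exceeds $10^{-5}\Delta_0=53$.

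The missing step is the Lipschitz constant $2$, and it follows directly from your two observations:
\begin{itemize}
\item at most one outside triple is created, by vacating $\gamma$, contributing $+1$;
\item at most one outside clean triple is destroyed, by occupying $\gamma'$, contributing $-1$.
\end{itemize}
These have opposite signs, so the net outside change lies in $\{-1,0,1\}$. Adding the $\pm1$ from $t$ itself gives $|\Delta Y|\le 2$ rather than $3$. The paper reaches the same conclusion by a case split on whether $t$ changes status. Then $\sum_j c_j^2\le 8L_F\le0.304\Delta$.

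You must also drop your rounding $\mathbb{E}Y\ge0.0021\Delta$: with $s=0.00175\Delta$ and $c_j=2$ the exponent is only about $2.01\cdot10^{-5}$, still short. Keep $s\ge0.057(0.038\Delta-1)-3.5\cdot10^{-4}\Delta\ge0.0018\Delta$, valid for $\Delta\ge\Delta_0$. This gives $2(0.0018)^2/0.304\approx2.13\cdot10^{-5}\ge2.1\cdot10^{-5}$. With these two corrections your argument coincides with the paper's proof.
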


\begin{proof}
The events $G_1$ and $G_2$, and the deterministic choice of $B=B(\varphi|_C)$, depend only on the colors of vertices of $C$. The supplied triples are pairwise vertex-disjoint, so the endpoint variables indexed by $B$ are distinct variables outside $C$. Therefore these endpoint colors remain mutually independent and uniform on $[q]$ after conditioning on $G_1\cap G_2$. Fix $t\in B$. By $G_1$,
$
  |S_{a_t}|, |S_{b_t}|\ge0.26\Delta.
$
Hence
\begin{equation}\tag{12}
\begin{aligned}
  \mathbb{P}(t\text{ is clean active})
  &\ge \left[\left(\frac{0.26\Delta}{q}\right)^2-\frac1q\right]
       \left(1-\frac2q\right)^{2L_F-2}.
\end{aligned}
\end{equation}
The term $-1/q$ accounts for the possible event $\alpha_t=\beta_t$. For $\Delta\ge\Delta_0$,
\[
  \left(\frac{0.26\Delta}{q}\right)^2-\frac1q>0.0675. \tag{13}
\]
Moreover $2L_F-2\le0.076\Delta$, and by \eqref{eq:log1},
\[
  \left(1-\frac2q\right)^{2L_F-2}
  \ge \exp\left(-\frac{0.152\Delta}{\Delta-3}\right)>0.858. \tag{14}
\]
Combining (12)--(14),
\[
  \mathbb{P}(t\text{ is clean active})>0.057,
\]
and therefore
\[
  \mathbb{E} Y\ge0.057L_F. \tag{15}
\]
Since $L_F\ge0.038\Delta-1$ and $r\le3.5\cdot10^{-4}\Delta$,
\[
  \mathbb{E} Y-r
  \ge0.057(0.038\Delta-1)-3.5\cdot10^{-4}\Delta
  \ge0.0018\Delta \tag{16}
\]
for $\Delta\ge\Delta_0$.

We next justify the Lipschitz bound used for $Y$.
\begin{claim}\label{claim:Y-lipschitz}
Changing one endpoint color changes $Y$ by at most $2$.
\end{claim}
\begin{proof}
Suppose that one endpoint color is changed from $\alpha$ to $\beta$. Apart from the triple containing the changed endpoint, only triples containing another endpoint of color $\alpha$ or $\beta$ can change their clean-active status. Since a clean-active triple requires each of its two endpoint colors to be unique among all endpoint colors in $B$, at most one other triple can become clean active because the old color $\alpha$ becomes unique, and at most one other clean-active triple can be destroyed because the new color $\beta$ ceases to be unique. If the changed triple becomes clean active after the change, then $\beta$ is unique after the change, so no other clean-active triple can be destroyed by the color $\beta$. If the changed triple ceases to be clean active, then $\alpha$ was unique before the change, so no other triple can be created by the color $\alpha$. Finally, if the changed triple does not change status, the possible outside creation and outside destruction have opposite signs, and hence the net change has absolute value at most $1$. In all cases, the absolute change in $Y$ is at most $2$.
\end{proof}
By Claim~\ref{claim:Y-lipschitz}, $Y$ is $2$-Lipschitz in each of the $2L_F$ endpoint colors. Hence
\[
  \sum_j c_j^2\le 2L_F\cdot2^2=8L_F\le0.304\Delta.
\]
By Lemma~\ref{lem:mcdiarmid} and (16),
\[
  \mathbb{P}(Y<r\mid G_1\cap G_2)
  \le \exp\left(-\frac{2(0.0018\Delta)^2}{0.304\Delta}\right)
  \le \exp(-2.1\cdot10^{-5}\Delta).
\]
If $Y\ge r$, choose any $r$ clean active triples; by definition, all $2r$ target colors $\{\alpha_t,\beta_t:t\in R\}$ are pairwise distinct.
\end{proof}

\begin{lemma}\label{lem:survival}
Condition on $G_1\cap G_2$ and on a clean active family $R$ with $|R|=r$. Let $S_F$ be the number of triples in $R$ which survive in the sense that both associated repeated-color nonedges survive conflict deletion. Then
\[
  \mathbb{P}(S_F<2\mid G_1\cap G_2, R)
  \le \exp(-1.5\cdot10^{-5}\Delta).
\]
\end{lemma}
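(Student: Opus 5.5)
The plan follows the proof of Claim~\ref{claim:Av-survival}. We expose the colors of all vertices outside $C$ that are not endpoints of triples in $B$. We show that each triple in $R$ survives with probability bounded below by a constant. Negative association of the survival indicators then lets us apply the Chernoff bound of Lemma~\ref{lem:chernoff-na}.

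\textbf{Step 1: what survival means, and the conditioning.} Fix $t\in R$. Since $\alpha_t\in S_{a_t}$, there is a unique vertex $u_t\in C$ with $\varphi(u_t)=\alpha_t$, and $u_t a_t\notin E(G)$. Similarly there is a unique $v_t\in C$ with $\varphi(v_t)=\beta_t$ and $v_tb_t\notin E(G)$. The pairs $\{a_t,u_t\}$ and $\{b_t,v_t\}$ are the two repeated-color nonedges in $N(c_t)$. They carry distinct colors, and $c_t$ is itself uncolored because $t$ is internally duplicate. Let $I_t$ be the indicator that all four of $a_t,u_t,b_t,v_t$ survive conflict deletion.

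The events $G_1$, $G_2$ and the choices of $B$ and $R$ are determined by $\varphi|_C$ together with the endpoint colors of $B$. The remaining vertices (outside $C$ and not endpoints of $B$) are therefore still independent and uniform on $[q]$.

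\textbf{Step 2: identifying the possible blockers.} We check which vertices can kill each of the four vertices.
\begin{itemize}
\item[$\bullet$] \emph{$u_t$.} Its color $\alpha_t$ is a singleton on $C$, so no vertex of $C$ kills it. The other endpoint colors in $B$ differ from $\alpha_t$ by the definition of clean active, and $a_t$ is nonadjacent to $u_t$. So only unexposed neighbors of $u_t$ outside $C$ can kill it. Since $|C|=\Delta-p$, there are at most $p+1\le\Delta/5$ of these.
\item[$\bullet$] \emph{$a_t$.} Its neighbors in $C$ avoid $\alpha_t$, because $u_t$ is the only vertex of $C$ with color $\alpha_t$ and $u_t\notin N(a_t)$. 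Its endpoint neighbors have other colors, again by clean activity. So at most $\Delta$ unexposed neighbors can kill $a_t$.
\item[$\bullet$] \emph{$b_t$ and $v_t$.} The same argument applies with $\beta_t$ in place of $\alpha_t$.
\end{itemize}
Altogether, $I_t=1$ unless one of at most $2(\Delta+\Delta/5)=2.4\Delta$ unexposed vertices receives color $\alpha_t$ or $\beta_t$. Using \eqref{eq:log1},
\[
\mathbb P(I_t=1)\ge\Bigl(1-\tfrac1q\Bigr)^{2.4\Delta}\ge \exp\Bigl(-\tfrac{2.4\Delta}{\Delta-2}\Bigr)>0.0907 .
\]

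\textbf{Step 3: negative association and concentration.} Each $I_t$ is a coordinatewise nonincreasing function of the one-hot indicators $X_{z,\alpha_t}$ and $X_{z,\beta_t}$ over unexposed vertices $z$. All $2r$ colors $\{\alpha_t,\beta_t:t\in R\}$ are pairwise distinct by Lemma~\ref{lem:clean-active}, so different $t$ use disjoint subfamilies of these indicators. Lemma~\ref{lem:NA-closure} then shows that the $I_t$ are negatively associated.

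With $S_F=\sum_{t\in R} I_t$ and $r=\lfloor 3.5\cdot10^{-4}\Delta\rfloor$, we get
\[
\mu_F:=\EE S_F\ge 0.0907\,r\approx 3.17\cdot 10^{-5}\Delta,
\]
which is far larger than $2$ at $\Delta_0$. Lemma~\ref{lem:chernoff-na} then gives
\[
\mathbb P(S_F<2)\le\exp\bigl(-(\mu_F-2)^2/(2\mu_F)\bigr).
\]
This is about $\exp(-1.58\cdot10^{-5}\Delta)$, and it is at most $\exp(-1.5\cdot10^{-5}\Delta)$ after a numerical check at $\Delta_0$ followed by monotonicity in $\Delta$.

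\textbf{Main obstacle.} The numerical margin is thin, so the blocker count in Step~2 must be sharp. In particular, we need the bound $p+1\le\Delta/5$ on the outside neighbors of $u_t,v_t$, rather than the crude bound $\Delta$. We also need to verify carefully that no exposed vertex (in $C$ or among the endpoints) can already kill any of $a_t,u_t,b_t,v_t$. That verification uses the singleton property of $\alpha_t,\beta_t$ on $C$, the nonadjacency built into $S_{a_t}$ and $S_{b_t}$, and the distinctness of endpoint colors.
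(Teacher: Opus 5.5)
Your proposal is correct and follows the paper's proof essentially step for step. It uses the same blocker count $2(\Delta+\Delta/5)=2.4\Delta$, with $p+1\le\Delta/5$ for the clique vertices and the singleton and clean-activity properties ruling out exposed blockers. It then uses the same negative-association argument via disjoint target-color subfamilies, and the same Chernoff bound with a numerical check at $\Delta_0$ followed by monotonicity.

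One step needs a small repair. A vertex adjacent to vertices of both pairs must avoid two colors, and $1-2/q<(1-1/q)^2$, so the bound $(1-1/q)^{2.4\Delta}$ is not literally justified. The paper handles this by splitting the blockers into $M$ one-color and $N$ two-color ones with $M+2N\le2.4\Delta$. Applying \eqref{eq:log1} to each factor still gives $\mathbb P(I_t=1)\ge\exp(-2.4\Delta/(\Delta-3))>0.0907$, so your constant stands.
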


\begin{proof}
Fix $t\in R$. Let $x_t\in C$ be the unique vertex of color $\alpha_t$ in $C$, and let $y_t\in C$ be the unique vertex of color $\beta_t$ in $C$. Since $\alpha_t\in S_{a_t}$ and $\beta_t\in S_{b_t}$,
\[
  a_tx_t\notin E(G),\qquad b_ty_t\notin E(G). \tag{17}
\]
Because $t$ is internally duplicate, the color of $c_t$ is not a singleton color on $C$. Hence $x_t,y_t\ne c_t$. Since $C$ is a clique,
$
  x_t,y_t\in N(c_t).
$
Also $a_t,b_t\in N(c_t)$ by the definition of supplied triples. Thus $(a_t,x_t)$ and $(b_t,y_t)$ are two same-color nonedges inside $N(c_t)$.

The pair $(a_t,x_t)$ can be killed only if some vertex adjacent to $a_t$ or $x_t$ receives color $\alpha_t$. Inside $C$ there is no such blocker: $\alpha_t$ is a singleton color on $C$, and the unique vertex $x_t$ is nonadjacent to $a_t$. Outside $C$, the vertex $a_t$ has at most $\Delta$ neighbors, and $x_t\in C$ has at most $p+1\le \Delta/5$ outside neighbors. Hence this pair has at most $1.2\Delta$ blockers. The same bound holds for $(b_t,y_t)$, so the total blocker-list multiplicity for both pairs is at most $2.4\Delta$.

All endpoints in $B\setminus\{t\}$ avoid the target colors $\alpha_t,\beta_t$ by clean activity. Thus any blocker already exposed among these endpoints does not kill either pair, and all remaining possible blockers still have independent uniform colors. Let $M$ be the number of remaining blockers relevant to exactly one of the two target colors $\alpha_t,\beta_t$, and let $N$ be the number of remaining blockers relevant to both target colors. The blocker-list multiplicity bound gives
$
  M+2N\le 2.4\Delta.
$
A one-color blocker avoids its target color with probability $1-1/q$, while a two-color blocker avoids both target colors with probability $1-2/q$. Since
\[
  1-\frac1q\ge \left(1-\frac2q\right)^{1/2},
\]
we have
\[
\begin{aligned}
  \mathbb{P}(t\text{ survives})
  &\ge (1-1/q)^M(1-2/q)^N  \\
  &\ge (1-2/q)^{M/2+N}
   \ge \left(1-\frac2q\right)^{1.2\Delta}.
\end{aligned}
\]
We shall use the slightly weaker bound
\[
  \mathbb{P}(t\text{ survives})
  \ge \left(1-\frac2q\right)^{1.2\Delta+1}. \tag{18}
\]
By \eqref{eq:log1}, for $\Delta\ge\Delta_0$,
\[
  \left(1-\frac2q\right)^{1.2\Delta+1}
  \ge \exp\left(-\frac{2.4\Delta+2}{\Delta-3}\right)>0.0906. \tag{19}
\]
Let $I_t$ be the indicator that $t$ survives. Since the target colors belonging to different triples of $R$ are pairwise distinct, the variable subfamilies used to determine different $I_t$ are pairwise disjoint. For each unexposed vertex $z$, the one-hot indicators $\bbone_{\{\varphi(z)=\gamma\}}$ over the target colors are negatively associated, and the survival indicators are coordinatewise decreasing functions of disjoint target-color subfamilies. Lemma~\ref{lem:NA-closure} therefore implies that $(I_t)_{t\in R}$ is negatively associated.

Let
\[
  S_F:=\sum_{t\in R} I_t,
\]
and let $\mu_F:=\mathbb{E} S_F$. By (19),
\[
  \mu_F\ge0.0906r\ge0.0906(3.5\cdot10^{-4}\Delta-1)=: \mu_0(\Delta).
\]
At $\Delta=\Delta_0$, $\mu_0(\Delta)>142.6$. Since the function
\[
  \mu\mapsto \frac{(\mu-2)^2}{2\mu}=\frac\mu2-2+\frac2\mu
\]
is increasing for $\mu>2$, Lemma~\ref{lem:chernoff-na} gives
\[
  \mathbb{P}(S_F<2)
  \le \exp\left(-\frac{(\mu_F-2)^2}{2\mu_F}\right)
  \le \exp\left(-\frac{(\mu_0-2)^2}{2\mu_0}\right).
\]
Finally, the function
\[
  \Delta\mapsto \frac{(\mu_0(\Delta)-2)^2}{2\mu_0(\Delta)}-1.5\cdot10^{-5}\Delta
\]
is increasing for $\Delta\ge\Delta_0$. Indeed, with $a:=0.0906\cdot3.5\cdot10^{-4}=3.171\cdot10^{-5}$, we have $\mu_0(\Delta)=a\Delta-0.0906$ and
\[
  \frac{d}{d\Delta}\left(\frac{(\mu_0-2)^2}{2\mu_0}-1.5\cdot10^{-5}\Delta\right)
  =a\left(\frac12-\frac{2}{\mu_0^2}\right)-1.5\cdot10^{-5}>0
\]
for $\Delta\ge\Delta_0$. At $\Delta=\Delta_0$ this function is greater than $1.3$. Hence
\[
  \mathbb{P}(S_F<2)
  \le \exp(-1.5\cdot10^{-5}\Delta).
\]
\end{proof}

\begin{lemma}\label{lem:Fi}
For every large clique $C_i$ and every $\Delta\ge\Delta_0$,
\[
  \mathbb{P}(F_i)\le e^{-3.5\cdot10^{-5}\Delta}+e^{-1.7\cdot10^{-5}\Delta}+e^{-2.1\cdot10^{-5}\Delta}+e^{-1.5\cdot10^{-5}\Delta}.
\]
\end{lemma}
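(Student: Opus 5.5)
The plan is to assemble the four preceding lemmas into a union bound over the chain of events built in this subsection. Work on the probability space of the uniform coloring $\varphi$ followed by conflict deletion. The sequence is: first the event $G_1$ (all supplied endpoints $z$ have $|S_z|\ge 0.26\Delta$), then $G_2$ (at least $L_F$ internally duplicate triples), then the event that a clean active family $R$ of size $r$ exists, and finally the event $S_F\ge 2$. We write
\[
\mathbb P(F_i)\le \mathbb P(G_1^c)+\mathbb P(G_2^c)+\mathbb P(Y<r\mid G_1\cap G_2)+\mathbb P(S_F<2\mid G_1\cap G_2,R),
\]
where the last term is a bound uniform over every admissible conditioning on $\varphi|_C$, the endpoint colors and $R$. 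By (11) and Lemmas~\ref{lem:clean-active} and~\ref{lem:survival}, the four terms are at most $e^{-3.5\cdot10^{-5}\Delta}$, $e^{-1.7\cdot10^{-5}\Delta}$, $e^{-2.1\cdot10^{-5}\Delta}$ and $e^{-1.5\cdot10^{-5}\Delta}$, which is the claimed bound. Formally, this inequality comes from $F_i\subseteq G_1^c\cup G_2^c\cup(G_1\cap G_2\cap\{Y<r\})\cup(G_1\cap G_2\cap\{Y\ge r\}\cap F_i)$. The last piece is then bounded by averaging the conditional estimate of Lemma~\ref{lem:survival} over the deterministic choices of $B$ and $R$.

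The substantive step is the deterministic claim that $F_i$ fails whenever $S_F\ge 2$. Take two surviving triples $t\ne s$ in $R$ and set $w_i:=c_t$ and $x_i:=c_s$. These are distinct because the triples are disjoint. Since $t$ is internally duplicate, the color $\varphi(c_t)$ appears on another vertex of the clique $C$, and both such vertices are uncolored by conflict deletion. So $c_t$ is uncolored, and likewise $c_s$. Survival of $t$ means the nonedges $(a_t,x_t)$ and $(b_t,y_t)$ keep the colors $\alpha_t$ and $\beta_t$ after deletion. As shown in the proof of Lemma~\ref{lem:survival}, all four vertices lie in $N(c_t)$. Clean activity gives $\alpha_t\neq\beta_t$. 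Hence $N(c_t)$ contains two distinct colors, each appearing on at least two colored vertices. The same holds for $c_s$, so condition~$(iii)$ holds for $C_i$.

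The main obstacle I expect is justifying the conditioning cleanly, that is, making sure that the survival estimate in Lemma~\ref{lem:survival} is valid conditionally on the exposed information. That information consists of the colors on $C$, the endpoint colors indexed by $B$, and the deterministic choice of $R$, which depends only on these. The plan is to note three facts. First, the survival indicators depend only on the colors of blockers outside $C$ and outside the endpoint set. Second, those colors are still independent and uniform. Third, the exposed endpoints in $B\setminus\{t\}$ avoid the target colors by clean activity. Then the conditional bound holds pointwise for each realization, and integrating it gives the unconditional contribution. Finally, one should check that $r\ge 2$ and $L_F\le m$ for $\Delta\ge\Delta_0$, so that $R$ and $B$ are well defined; both follow immediately from $m\ge(\Delta-2)/11$ and the numerical value of $\Delta_0$.
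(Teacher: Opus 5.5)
Your proposal is correct and follows essentially the same route as the paper. Both use staged exposure with a union bound over $G_1^c$, $G_2^c$, $\{Y<r\}$ and $\{S_F<2\}$, together with the deterministic observation that the centers $c_t,c_s$ of two surviving clean-active, internally duplicate triples are distinct uncolored vertices of $C_i$, each with two distinct repeated colors ($\alpha_t\neq\beta_t$, resp.\ $\alpha_s\neq\beta_s$) in its neighborhood; your remarks on the pointwise validity of the conditional bounds and on $r\ge 2$, $L_F\le m$ only make explicit what the paper leaves implicit.
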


\begin{proof}
Expose the colors in stages. First expose the colors of $C$. By (11), the probability that $G_1\cap G_2$ fails is at most
$
  e^{-3.5\cdot10^{-5}\Delta}+e^{-1.7\cdot10^{-5}\Delta}.
$
Condition on $G_1\cap G_2$. Choose $B$ as in Lemma~\ref{lem:clean-active} and expose the endpoint colors of triples in $B$. By Lemma~\ref{lem:clean-active}, the probability that there is no clean active subfamily $R\subseteq B$ of size $r=\lfloor3.5\cdot10^{-4}\Delta\rfloor$ is at most
$
  e^{-2.1\cdot10^{-5}\Delta}.
$
Condition on such a family $R$, and expose all remaining colors. By Lemma~\ref{lem:survival}, the conditional probability that fewer than two triples of $R$ survive is at most
$
  e^{-1.5\cdot10^{-5}\Delta}.
$

If two clean active triples survive, then $F_i$ does not occur. Indeed, for a surviving triple $t$, the center $c_t$ is uncolored because $t$ is internally duplicate. By (17), the pairs $(a_t,x_t)$ and $(b_t,y_t)$ are nonadjacent same-color pairs, and survival means neither pair is deleted by any conflict. Moreover, all four vertices $a_t,b_t,x_t,y_t$ lie in $N(c_t)$, and the two target colors are distinct. Hence $N(c_t)$ contains two distinct repeated colors after conflict deletion. The supplied triples are vertex-disjoint, so two surviving triples give two distinct uncolored vertices of $C$ with the desired property. Thus $F_i$ fails.

The union bound gives
\[
\begin{aligned}
  \mathbb{P}(F_i)
  &\le e^{-3.5\cdot10^{-5}\Delta}
      +e^{-1.7\cdot10^{-5}\Delta}
      +e^{-2.1\cdot10^{-5}\Delta}
      +e^{-1.5\cdot10^{-5}\Delta}.
\end{aligned}
\]
\end{proof}

\subsection{Completion of the proof}

\begin{proof}[Proof of Theorem~\ref{main}]
Suppose, for a contradiction, that Theorem~\ref{main} fails, and choose a
minimal counterexample $G$ with maximum degree $\Delta\ge\Delta_0$ and
$\omega(G)<\Delta$. By Brooks' theorem, this is the nontrivial case
$\omega(G)\le \Delta-1$ and $\chi(G)\ge \Delta$.

Let $\mathcal{E}$ be the family of all bad events $A_v$, $E_i$, and $F_i$
arising from the random partial coloring with $q=\Delta-1$. Set
\[
\begin{aligned}
  p^*:=\max\{&
  e^{-2.43\cdot10^{-5}\Delta}+e^{-2.2\cdot10^{-5}\Delta},\\
  &e^{-0.15\Delta},\\
  &e^{-3.5\cdot10^{-5}\Delta}
   +e^{-1.7\cdot10^{-5}\Delta}
   +e^{-2.1\cdot10^{-5}\Delta}
   +e^{-1.5\cdot10^{-5}\Delta}\}.
\end{aligned}
\]
By Lemmas~\ref{lem:Av}, \ref{lem:Ei}, and \ref{lem:Fi}, every event in
$\mathcal{E}$ has probability at most $p^*$.

The number of vertices within distance $4$ of any fixed vertex is at most
\[
  1+\Delta+\Delta(\Delta-1)+\Delta(\Delta-1)^2+\Delta(\Delta-1)^3
  <\Delta^4.
\]
Hence each event $A_v$ is mutually independent of all but at most $\Delta^4$
other events of the form $A_u$.

Each event $E_i$ depends only on the colors of vertices in $S_i$ and vertices
within distance one of $S_i$. Similarly, each event $F_i$ depends only on the
colors of vertices in $S_i$ and vertices within distance two of $S_i$.
Consequently, each event in $\mathcal E$ is mutually independent of all but at
most
$
  d:=\Delta^5+2\Delta^4
$
other events in $\mathcal E$.

For $\Delta\ge\Delta_0$, the third term in the definition of $p^*$ is the
largest one. Hence
\[
\begin{aligned}
  e p^*(d+1)
  &\le
  e(\Delta^5+2\Delta^4+1)
  \bigl(
     e^{-3.5\cdot10^{-5}\Delta}
     +e^{-1.7\cdot10^{-5}\Delta}
     +e^{-2.1\cdot10^{-5}\Delta}
     +e^{-1.5\cdot10^{-5}\Delta}
  \bigr).
\end{aligned}
\]
Define
\[
\begin{aligned}
  h(\Delta):={}&
  1+\log(\Delta^5+2\Delta^4+1)  \\
  &+\log\!\left(
     e^{-3.5\cdot10^{-5}\Delta}
     +e^{-1.7\cdot10^{-5}\Delta}
     +e^{-2.1\cdot10^{-5}\Delta}
     +e^{-1.5\cdot10^{-5}\Delta}
  \right).
\end{aligned}
\]
A direct calculation gives
\[
  h(5.3\times10^6)<-1.08.
\]
Moreover, for $\Delta\ge5.3\times10^6$,
\[
\begin{aligned}
  h'(\Delta)
  \le
  \frac{5}{\Delta}+\frac{8}{\Delta^2}-1.5\cdot10^{-5}
  <0.
\end{aligned}
\]
Thus \(h(\Delta)<0\) for every \(\Delta\ge\Delta_0\), and therefore
\[
  e p^*(d+1)<1.
\]
By the symmetric Lov\'asz Local Lemma, with positive probability no event in
$\mathcal{E}$ occurs.

If no $A_v$ occurs, then every leftover vertex $v\in L$ has at least two
distinct repeated colors in its neighborhood after conflict deletion. If no
$E_i$ occurs, then every near-clique vertex $u_i$ has at least two uncolored
neighbors in $C_i$. If no $F_i$ occurs, then every large clique $C_i$ contains
two uncolored vertices whose neighborhoods each contain two distinct repeated
colors. Thus all three hypotheses of Lemma~\ref{coloring} hold. The random
partial $(\Delta-1)$-coloring extends to a full $(\Delta-1)$-coloring of $G$,
contradicting the choice of $G$ as a counterexample. This proves
Theorem~\ref{main}.
\end{proof}
\section*{Acknowledgements}

This work was supported by the National Key R\&D Program of China
(No.~2022YFA1006400) and the National Natural Science Foundation of China
(No.~12571376).

\end{document}